\documentclass[11pt, amsfonts]{amsart}


\usepackage{amsmath,amssymb,amsthm,enumerate,comment}
\usepackage[all]{xy}
\usepackage{color}
\usepackage{graphicx}
\usepackage[T1]{fontenc}
\usepackage[dvipdfm,colorlinks=true]{hyperref}
\usepackage[colorlinks=true]{hyperref}


\textwidth 6in
\oddsidemargin .25in
\evensidemargin .25in
\parskip .02in


\SelectTips{eu}{12}

\theoremstyle{definition}
\newtheorem{theorem}{Theorem}[section]
\newtheorem{definition}[theorem]{\rm Definition}
\newtheorem{lemma}[theorem]{Lemma}
\newtheorem{proposition}[theorem]{Proposition}
\newtheorem{corollary}[theorem]{Corollary}

\newtheorem{remark}[theorem]{\rm Remark}


\makeatletter
\@addtoreset{equation}{section} 
\makeatother

\DeclareMathOperator{\Homeo}{Homeo}

\DeclareMathOperator{\id}{id}

\DeclareMathOperator{\Symp}{Symp}

\DeclareMathOperator{\Hom}{Hom}

\DeclareMathOperator{\grp}{grp}

\DeclareMathOperator{\ev}{ev}
\DeclareMathOperator{\Gau}{Gau}
\DeclareMathOperator{\Aut}{Aut}

\newcommand{\tHomeo}{\mathrm{H}\widetilde{\mathrm{omeo}}}
\newcommand{\HHH}{\mathrm{H}}
\newcommand{\ZZ}{\mathbb{Z}}
\newcommand{\RR}{\mathbb{R}}
\newcommand{\GG}{\Gamma}

\makeatletter
\@namedef{subjclassname@2020}{%
  \textup{2020} Mathematics Subject Classification}
\makeatother

\title[The Dixmier-Douady class and an abelian extension]{The Dixmier-Douady class and an abelian extension of the homeomorphism group}
\author{Shuhei Maruyama}
\address{Graduate School of Mathematics, Nagoya University, Japan}
\email{m17037h@math.nagoya-u.ac.jp}

\subjclass[2020]{37E45; 37E10}
\keywords{Dixmier-Douady class; Gauge group.}

\begin{document}

\begin{abstract}
  Let $X$ be a connected topological space and $c \in \HHH^2(X;\ZZ)$ a non-zero cohomology class.
  A $\Homeo(X,c)$-bundle is a fiber bundle with fiber $X$ whose structure group reduces to the group $\Homeo(X,c)$ of $c$-preserving homeomorphisms of $X$.
  If $\HHH^1(X;\ZZ) = 0$, then a characteristic class for $\Homeo(X,c)$-bundles called the Dixmier-Douady class is defined via the Serre spectral sequence.
  We show a relation between the universal Dixmier-Douady class for foliated $\Homeo(X,c)$-bundles and the gauge group extension of $\Homeo(X,c)$.
  Moreover, under some assumptions, we construct a central $S^1$-extension and a group two-cocycle on $\Homeo(X,c)$ corresponding to the Dixmier-Douady class.
\end{abstract}

\maketitle

\section{Introduction}

\subsection{Main results}

Let $X$ be a connected topological space whose homeomorphism group $\Homeo(X)$ is a topological group with respect to the compact-open topology.
Assume that there exists a non-zero cohomology class $c \in \HHH^2(X;\ZZ)$.
Let $G=\Homeo(X,c)$ be the group of $c$-preserving homeomorphisms of $X$, and $G^{\delta}$ the group $G$ with the discrete topology.
A fiber bundle $X \to E \to B$ is called a \emph{$G$-bundle} if the structure group reduces to $G$.
A $G$-bundle is said to be \emph{foliated} if the structure group reduces to the discrete group
$G^{\delta}$.

Assume that $\HHH^1(X;\ZZ) = 0$.
For a $G$-bundle $X \to E \to B$, we define a characteristic class $D_c(E)$ in $\HHH^3(B;\ZZ)$ as the image of $c$ under the transgression map 
(see Subsection \ref{construction}).
We call this class $D_c(E)$ the \emph{Dixmier-Douady class with respect to $c$}, or simply the \emph{Dixmier-Douady class} if $c$ is understood.
Let $D_c \in \HHH^3(BG;\ZZ)$ denote the universal Dixmier-Douady class.
Under the map
\begin{align}\label{discretizing_map_cohomology}
  \iota^*\colon \HHH^*(BG;\ZZ) \to \HHH^*(BG^{\delta};\ZZ)
\end{align}
induced from the canonical map $G^{\delta} \to G$ and the isomorphism $\HHH^3(BG^{\delta};\ZZ) \cong \HHH_{\grp}^3(G;\ZZ)$, we obtain the third group cohomology class $\iota^* D_c$ in $\HHH_{\grp}^3(G;\ZZ)$, which we call the \emph{discrete Dixmier-Douady class}.

In this paper we give algebraic interpretations of the discrete Dixmier-Douady class.
Throughout this paper, we regard the circle $S^1$ as the quotient group $\RR/\ZZ$.
Let $P \to X$ be a principal $S^1$-bundle whose first Chern class is equal to $c \in \HHH^2(X;\ZZ)$.
Then the group $\Aut(P)$ of bundle automorphisms of $P$ gives rise to a group extension
\begin{align}\label{gauge_extension}
  0 \to \Gau(P) \to \Aut(P) \to G \to 1.
\end{align}
Here $\Gau(P)$, which is called the \emph{gauge group of $P$}, is the kernel of the natural projection.
Since the group $S^1$ is abelian, the gauge group is also abelian.
Hence group extension (\ref{gauge_extension}) is an abelian extension.
In general, an abelian extension $1 \to A \to \Gamma \to G \to 1$ determines a second group cohomology class $e(\Gamma) \in \HHH_{\grp}^2(G; A)$.
Here $A$ is regarded as a right $G$-module by the conjugation $\GG$-action on $A$.
In particular, abelian extension (\ref{gauge_extension}) defines a group cohomology class $e(\Aut(P))$ in $\HHH_{\grp}^2(G;\Gau(P))$.

Here we obtain two group cohomology classes, the class $e(A_G(P)) \in \HHH_{\grp}^2(G;\Gau(P))$ corresponding to (\ref{gauge_extension}) and the discrete Dixmier-Douady class $\iota^*D_c \in \HHH_{\grp}^3(G;\ZZ)$.
Our main theorem (Theorem \ref{main_thm}) describes the relation between these two classes.

To state Theorem \ref{main_thm}, we need to define a map $\mathbf{d} \colon \HHH_{\grp}^2(G;\Gau(P)) \to \HHH_{\grp}^3(G;\ZZ)$.
Note that, since $S^1$ is abelian, the gauge group $\Gau(P)$ is isomorphic to the group $C(X,S^1)$ of $S^1$-valued continuous maps on $X$.
Let $C(X,\RR)$ denote the group of continuous maps from $X$ to $\RR$.
By the assumption $\HHH^1(X;\ZZ) = 0$ and the identification $\HHH^1(X;\ZZ) = \Hom(\pi_1(X),\ZZ) = \Hom(\pi_1(X), \pi_1(S^1))$, all elements in $C(X,S^1)$ can be lifted to elements in $C(X,\RR)$.
Hence the sequence
\begin{align}\label{coeff_ex_seq_gauge}
  0 \to \ZZ \to C(X,\RR) \to C(X,S^1) \to 0
\end{align}
is exact.
By (\ref{coeff_ex_seq_gauge}) and the identification $\Gau(P) \cong C(X, S^1)$, we have the connecting homomorphism
\begin{align*}
  \mathbf{d} \colon \HHH_{\grp}^2(G;\Gau(P)) \to \HHH_{\grp}^3(G;\ZZ).
\end{align*}

We are now ready to state Theorem \ref{main_thm}.

\begin{theorem}\label{main_thm}
  Let $\mathbf{d} \colon \HHH_{\grp}^2(G;\Gau(P)) \to \HHH_{\grp}^3(G;\ZZ)$ denote the connecting homomorphism.
  Then the following holds:
  \[
    \mathbf{d} (e(\Aut(P))) = \iota^*D_c.
  \]
\end{theorem}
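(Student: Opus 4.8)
The plan is to identify the connecting homomorphism $\delta$ concretely and then match two explicit $3$-cocycles. First I would record two structural facts. Since $G=\Homeo(X)_0$ is connected, every $g\in G$ is homotopic to $\id_X$, so $G$ acts trivially on $H^*(X;\mathbb{Z})$; in particular $c\in H^2(X;\mathbb{Z})$ is $G$-invariant. Moreover, the hypothesis $H^1(X;\mathbb{Z})=0$ forces $d_2=0$ on $E_2^{0,2}=H^0(BG;H^2(X))$ in the Serre spectral sequence of $X\to EG\times_G X\to BG$ (its target $H^2(BG;H^1(X))$ vanishes) and identifies $E_3^{3,0}$ with $H^3(BG;\mathbb{Z})$, so that $c$ is transgressive and $e_c=d_3c$. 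The same hypothesis makes the reduction map $\mathrm{Map}(X,\mathbb{R})\to\mathrm{Map}(X,S^1)$ surjective, because a map $X\to S^1$ lifts to $\mathbb{R}$ exactly when it is null-homotopic, and $[X,S^1]=H^1(X;\mathbb{Z})=0$.

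Using the canonical $G$-equivariant identification $\Gau(P)\cong\mathrm{Map}(X,S^1)$ (with $G$ acting by precomposition), the exact sequence of $S^1=\mathbb{R}/\mathbb{Z}$ yields a short exact sequence of $G$-modules
\[
  0\longrightarrow\mathbb{Z}\longrightarrow\mathrm{Map}(X,\mathbb{R})\longrightarrow\Gau(P)\longrightarrow 0,
\]
where $\mathbb{Z}$ denotes the constant functions (using that $X$ is connected), with trivial $G$-action, and I take $\delta$ to be the connecting homomorphism of this sequence. To compute $\delta e(A_G(P))$ (the class denoted $e(\Aut(P))$ in the statement), choose a set-theoretic section $g\mapsto\hat{g}$ of $A_G(P)\to G$, i.e.\ a bundle automorphism $\hat g$ of $P$ covering $g$, and let $\tau(g,h)=\hat g\,\hat h\,\widehat{gh}^{\,-1}\in\Gau(P)$ be the resulting $2$-cocycle representing $e(A_G(P))$. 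Lifting each $\tau(g,h)$ to $T(g,h)\in\mathrm{Map}(X,\mathbb{R})$, the class $\delta e(A_G(P))$ is represented by the integer-valued $3$-cocycle
\[
  (dT)(g,h,k)=g\!\cdot\!T(h,k)-T(gh,k)+T(g,hk)-T(g,h).
\]

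The conceptual core is that this same datum computes the transgression on the discrete model $BG^{\delta}$. I would model $BG^{\delta}$ by the nerve $N_\bullet G$ and the universal flat $X$-bundle by the nerve of the action groupoid, so that an edge labelled $g$ carries the gluing by $g$. Transgressing $c=c_1(P)$ then amounts to extending the $S^1$-bundle $P$ over the skeleta: over an edge $[g]$ a trivialization of the gluing is precisely a bundle automorphism of $P$ covering $g$, so the section $g\mapsto\hat g$ is exactly the choice of extension over the $1$-skeleton; over a triangle $[g|h]$ the discrepancy of these choices is the gauge transformation $\tau(g,h)$; and over a tetrahedron $[g|h|k]$ the obstruction to matching the $\mathbb{R}$-primitives $T$ is the integer $(dT)(g,h,k)$. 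Thus $dT$ represents both $\delta e(A_G(P))$ and $d_3c=\iota^*e_c$, by naturality of transgression under $BG^{\delta}\to BG$, which gives the theorem.

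I expect the main obstacle to be making this last comparison rigorous: proving that the explicit cocycle assembled from $(\hat g,\tau,T)$ indeed represents the Serre transgression $d_3c$. This requires matching the spectral-sequence filtration with the simplicial (skeletal) filtration of $BG^{\delta}$ and tracking the secondary differential $d_3$ as an iterated connecting map, i.e.\ establishing the ``transgression equals successive primitive-discrepancy over simplices'' principle in this flat setting; the bookkeeping of signs and of the $G$-action in passing from the homogeneous to the inhomogeneous bar complex is where the care is needed.
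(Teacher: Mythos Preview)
Your plan is sound and would work, but it takes a genuinely different route from the paper. You aim to produce a single explicit bar $3$-cocycle $(dT)(g,h,k)$ and then argue, via obstruction theory on the skeleta of the simplicial model of $BG^{\delta}$, that this same cocycle represents the Serre transgression of $c$. The paper never writes down such a cocycle. Instead it introduces an auxiliary fibration $B\Gau(P)\to BA_G(P)\to BG$ and proceeds by spectral-sequence naturality: a map of fibrations identifies $e_c=-d_3^{0,2}c$ with $-d_3^{'''0,2}(\ev_x^*c_1)$; passing to discrete groups gives $\iota^*e_c=-d_3^{''0,2}(\iota^*\ev_x^*c_1)$; then Lemma~\ref{lemma:commuting_diagram} shows that the connecting homomorphism $\delta$ intertwines the Hochschild--Serre differentials $d_2^{'0,1}$ and $-d_3^{''0,2}$, so that together with Lemma~\ref{lemma:identity-Euler_class} (namely $e(A_G(P))=-d_2^{'0,1}(\id_{\Gau(P)})$) and the cited identity $\iota^*\ev_x^*c_1=-\delta(\id_{\Gau(P)})$ one obtains the result. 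Your approach is more self-contained and concrete; the paper's is more structural and sidesteps precisely the step you correctly flag as the obstacle, namely the identification of the iterated primitive-discrepancy with the $d_3$-differential, by packaging it into the commuting square of Lemma~\ref{lemma:commuting_diagram}. One small correction: the paper's convention is $e_c=-d_3^{0,2}c$, not $+d_3c$, so your sign bookkeeping at the end needs to absorb that minus; and note the paper uses right $G$-modules, so your $g\cdot T(h,k)$ should be $T(h,k)\cdot k$ in its conventions.
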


From here to the end of this subsection, we further assume that the cohomology class $c \in \HHH^2(X;\ZZ)$ is equal to zero in $\HHH^2(X;\RR)$.
By abuse of notation, we use the symbol $\mathbf{d}$ to denote the connecting homomorphism $\mathbf{d} \colon \HHH_{\grp}^2(G;S^1) \to \HHH_{\grp}^3(G;\ZZ)$ induced from the exact sequence $0 \to \ZZ \to \RR \to S^1 \to 1$ of coefficients.
Since the sequence
\[
  \cdots \to \HHH^1(X;S^1) \overset{\mathbf{d}}{\to} \HHH^2(X;\ZZ) \to \HHH^2(X;\RR) \to \cdots
\]
is exact, there exists a cohomology class $\rho \in \HHH^1(X;S^1)$ satisfying $\mathbf{d}(\rho) = c$.
By using $\rho$, we can construct a central $S^1$-extension
\begin{align}
  1 \to S^1 \to \Aut(P_{\rho}^{\delta}) \to G \to 1
\end{align}
(see Section \ref{section:S^1-extension}).
This central $S^1$-extension determines a second cohomology class $e(\Aut(P_{\rho}^{\delta}))$ in $\HHH_{\grp}^2(G;S^1)$.
Then the following theorem holds, which is analogous to Theorem \ref{main_thm}.

\begin{theorem}\label{main_thm_S^1}
  Let $\mathbf{d} \colon \HHH_{\grp}^2(G;S^1) \to \HHH_{\grp}^3(G;\ZZ)$ be the connecting homomorphism with respect to the exact sequence $0 \to \ZZ \to \RR \to S^1 \to 1$ of coefficients.
  Then the following holds:
  \[
    \mathbf{d} (e(\Aut(P_{\rho}^{\delta}))) = \iota^*D_c.
  \]
\end{theorem}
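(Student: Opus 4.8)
The plan is to deduce Theorem \ref{main_thm_S^1} from Theorem \ref{main_thm} by a naturality argument, comparing the two coefficient short exact sequences through the inclusion of constant functions and comparing the two extensions through the associated-bundle construction. Write $\delta_{\mathbb{R}}$ for the connecting homomorphism attached to $0 \to \mathbb{Z} \to \mathbb{R} \to S^1 \to 0$ (the map appearing in the statement) and $\delta_{C}$ for the one attached to $0 \to \mathbb{Z} \to C(X,\mathbb{R}) \to C(X,S^1) \to 0$ (the map of Theorem \ref{main_thm}). The inclusions of constant functions $\mathbb{R} \hookrightarrow C(X,\mathbb{R})$ and $S^1 \hookrightarrow C(X,S^1)$, together with the identity on $\mathbb{Z}$, constitute a morphism between these two coefficient sequences. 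Let $i_* : H_{\grp}^2(G;S^1) \to H_{\grp}^2(G;C(X,S^1))$ denote the map induced by the inclusion $i : S^1 \to C(X,S^1) \cong \Gau(P)$ of constants. Since the induced map on the common target $H_{\grp}^3(G;\mathbb{Z})$ is the identity, naturality of the connecting homomorphism in the coefficient sequence yields the identity $\delta_{\mathbb{R}} = \delta_{C} \circ i_*$.

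Granting this, the theorem reduces to the single claim
\[
  i_* e(A_G(P_{\rho}^{\delta})) = e(A_G(P)) \quad \text{in } H_{\grp}^2(G;\Gau(P)).
\]
Indeed, once this is established we compute
\[
  \delta_{\mathbb{R}} e(A_G(P_{\rho}^{\delta})) = \delta_{C} \, i_* e(A_G(P_{\rho}^{\delta})) = \delta_{C} \, e(A_G(P)) = \iota^* e_c,
\]
the final equality being exactly Theorem \ref{main_thm} (in the notation of which $A_G(P) = \Aut(P)$ is the total group of the gauge extension \eqref{gauge_extension}). To prove the displayed claim I would exhibit a morphism of abelian extensions covering $\id_G$ whose restriction to the kernels is precisely the inclusion $i$ of constants; such a commutative ladder identifies the lower extension with the pushout of the upper one along $i$, which is exactly the statement $i_* e(A_G(P_{\rho}^{\delta})) = e(A_G(P))$.

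The required morphism of extensions is furnished by the associated-bundle construction. Because $\delta(\rho) = c$, extending the structure group of the flat principal $(S^1)^{\delta}$-bundle $P_{\rho}^{\delta}$ from the discrete group $(S^1)^{\delta}$ to $S^1$ recovers a principal $S^1$-bundle isomorphic to $P$; that is, $P \cong P_{\rho}^{\delta} \times_{(S^1)^{\delta}} S^1$. Functoriality of this construction sends a bundle automorphism of $P_{\rho}^{\delta}$ covering $g \in G$ to a bundle automorphism of $P$ covering the same $g$, producing a continuous homomorphism $\Phi : A_G(P_{\rho}^{\delta}) \to A_G(P)$ that fits into a commutative ladder over $\id_G$. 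On the kernels one checks that the central $S^1 \subset A_G(P_{\rho}^{\delta})$, realized as the constant right translations $p \mapsto p\lambda$, induces on $P$ exactly the gauge transformation given by the constant function $\lambda \in C(X,S^1)$, so that $\Phi$ restricts to $i$ on kernels.

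The main obstacle lies in this last, concrete identification. One must verify that $\Phi$ is a well-defined continuous homomorphism covering $\id_G$, and, more delicately, pin down the kernel of $A_G(P_{\rho}^{\delta}) \to G$: because the structure group is discrete, automorphisms covering the identity correspond to locally constant maps $X \to S^1$, which collapse to the constant $S^1$ under the connectedness hypothesis on $X$ underlying the construction of Section \ref{section:S^1-extension}. Confirming that this central $S^1$ maps under $\Phi$ \emph{onto} the constants in $\Gau(P) \cong C(X,S^1)$, and not merely into $\Gau(P)$, is what makes the left-hand square of the ladder commute and therefore what drives the entire reduction. Once the ladder and this kernel computation are in place, naturality of $\delta$ and Theorem \ref{main_thm} close the argument with no further computation.
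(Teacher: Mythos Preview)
Your proposal is correct and follows essentially the same route as the paper: reduce to Theorem~\ref{main_thm} via naturality of the connecting homomorphism under the morphism of coefficient sequences $(\mathbb{Z},\mathbb{R},S^1)\to(\mathbb{Z},C(X,\mathbb{R}),C(X,S^1))$, then establish $i_* e(A_G(P_{\rho}^{\delta})) = e(A_G(P))$ by exhibiting a morphism of extensions over $\id_G$ (the paper's Proposition~\ref{prop:j_*_coeff_change}, with $P_{\rho}=P_{\rho}^{\delta}\times_{(S^1)^{\delta}}S^1$ playing the role of your $P$). Your discussion of the associated-bundle construction and the kernel identification in fact supplies detail that the paper leaves implicit when it writes down the ladder preceding Proposition~\ref{prop:j_*_coeff_change}.
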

Moreover, we describe a group two-cocycle $\mathfrak{G}_{x,\alpha} \in C_{\grp}^2(G;S^1)$ which represents the class $e(\Aut(P_{\rho}^{\delta}))$ (see (\ref{cocycle_S1_extension})).

\subsection{Background}

To give a context to our work, we briefly describe a related results.
Our main theorem can be seen as a higher dimensional analogue of the relation between the Euler class of foliated $S^1$-bundles and the universal covering of the group $\Homeo_+(S^1)$ of orientation preserving homeomorphisms of the circle.

One way to define the Euler class of (orientable) $S^1$-bundle $S^1 \to E \to B$ is to use the transgression map $d_2^{0,1} \colon \HHH^1(S^1;\ZZ) \to \HHH^2(B;\ZZ)$, that is, the Euler class $e(E)$ is defined by $e(E) = -d_2^{0,1}(c)$ for the generator $c \in \HHH^1(S^1;\ZZ)$.
The universal Euler class $e$ is the class in $\HHH^2(B\Homeo_+(S^1);\ZZ)$, where $\Homeo_+(S^1)$ is the group of orientation preserving homeomorphisms.
The universal Euler class of foliated $S^1$-bundles (discrete Euler class) is defined by $\iota^*e$, where $\iota^* \colon \HHH^2(B\Homeo_+(S^1);\ZZ) \to \HHH^2(B\Homeo_+(S^1)^{\delta};\ZZ) \cong \HHH_{\grp}^2(\Homeo_+(S^1);\ZZ)$ is the canonical map.
As an algebraic interpretation of the discrete Euler class, it is known that the central $\ZZ$-extension
\[
  0 \to \ZZ \to \tHomeo_+(S^1) \to \Homeo_+(S^1) \to 1
\]
induced from the universal covering of $\Homeo_+(S^1)$ corresponds to the universal Euler class of foliated $S^1$-bundles (see \cite{Fr} for example).

This correspondence between the Euler class of $S^1$-bundles and the central extension is generalized in \cite{1105.0825} as follows.
Let $X$ be a connected topological space and $c \in \HHH^1(X;\ZZ)$ a non-zero \emph{first} cohomology class.
Let $\Homeo(X,c)$ be the group of $c$-preserving homeomorphisms of $X$ and $X \to E \to B$ a $\Homeo(X,c)$-bundle.
A characteristic class $e_c(E) \in \HHH^2(B;\ZZ)$ of the $G$-bundle is defined as the image of $c$ under the transgression map.
The universal characteristic class $e_c$ is defined as an element of $H^2(B\Homeo(X,c);\ZZ)$ and the universal characteristic class of foliated $\Homeo(X,c)$-bundles is given by $\iota^*e_c$, where $\iota^*\colon \HHH^*(BG;\ZZ) \to \HHH^*(BG^{\delta};\ZZ)$ is the canonical map.
It was shown in \cite[Section 5]{1105.0825} that the class $\iota^*e_c$ corresponds to a central $\ZZ$-extension of $G$ given by the group of homeomorphisms of the covering space of $X$ associated to $c \in \HHH^1(X;\ZZ)$ which commute with the deck transformations.

Theorems \ref{main_thm} and \ref{main_thm_S^1} can be seen as results analogous to the above ones.
We note that in our setting, the class $e_c$ cannot be defined since there are no non-zero class in $\HHH^1(X;\ZZ)$.

There is also an analogous result in symplectic geometry (\cite{2009.01022}).
Let $(X,\omega)$ be a one-connected integral symplectic manifold, that is, the class $[\omega] \in \HHH^2(X;\RR)$ is in the image of the map $\HHH^2(X;\ZZ) \to \HHH^2(X;\RR)$).
Then, if we replace $\Homeo(X,c)$ with the \emph{symplectomorphism group} $\Symp(X,\omega)$, we obtain the Dixmier-Douady class for symplectic fibrations.
The Dixmier-Douady class for symplectic fibration (or Hamiltonian fibration) is studied in \cite{savelyev_shelukhin20}.
It is known that there exists a central $S^1$-extension of $\Symp(X,\omega)$ called the \emph{prequantum extension}, and this central extension corresponds to the discrete Dixmier-Douady class of foliated symplectic fibrations in the same manner as in Theorem \ref{main_thm_S^1} (\cite{2009.01022}).
Note that the existence of the prequantum extension is guaranteed without the assumption that the class $[\omega] \in \HHH^2(X;\RR)$ is zero, which corresponds to the condition in Theorem \ref{main_thm_S^1}.


\section{Preliminaries}\label{sec:pre}

\subsection{The Dixmier-Douady class}\label{construction}

The following description of the Dixmier-Douady class is based on \cite{carey_crowley_murray98}.
For the basics of the Dixmier-Douady class, see \cite{MR1197353} for example.

Let $X$ be a connected topological space satisfying $\HHH^1(X;\ZZ) = 0$.
Take a non-zero cohomology class $c$ in $\HHH^2(X;\ZZ)$.
Let $X \to E \to B$ denote a $G$-bundle with a connected base space $B$, and $(E_r^{p,q}, d_r^{p,q})$ the Serre spectral sequence of the bundle $X \to E \to B$.
By the assumption $\HHH^1(X;\ZZ) = 0$, we have
\[
  E_3^{0,2} = E_2^{0,2} = \HHH^0(B;\mathcal{H}^2(X;\ZZ)) 
\]
and
\[
  E_3^{3,0} = E_2^{3,0} = \HHH^3(B;\mathcal{H}^0(X;\ZZ)) =\HHH^3(B;\ZZ),
\]
where $\mathcal{H}^*(X;\ZZ)$ is the local system over $B$ induced from the bundle $X \to E \to B$.
It is known that the cohomology group $\HHH^0(B;\mathcal{H}^2(X;\ZZ))$ is isomorphic to the invariant part $\HHH^2(X;\ZZ)^{\pi_1(B)}$ of the monodromy action of $\pi_1(B)$ on $\HHH^2(X;\ZZ)$ (see \cite[Proposition 5.14]{MR1841974} for example).
Since the structure group of the bundle $E$ is contained in $G=\Homeo(X,c)$, the monodromy action preserves the class $c$, and hence the class $c$ can be seen as an element of $E_3^{0,2} = \HHH^0(B;\mathcal{H}^2(X;\ZZ))$.
By using the transgression map
\[
  d_3^{0,2} \colon E_3^{0,2} \to E_3^{3,0} = \HHH^3(B;\ZZ),
\]
we obtain the cohomology class $d_3^{0,2} c$ in $\HHH^3(B;\ZZ)$.
\begin{definition}
  For a $G$-bundle $X \to E \to B$, the cohomology class
  \[
    D_c(E) = -d_3^{0,2}c \in \HHH^3(B;\ZZ)
  \]
  is called the \emph{Dixmier-Douady class with respect to $c$} (or simply called the \emph{Dixmier-Douady class} if $c$ is understood).
\end{definition}

By the naturality of the Serre spectral sequence, the cohomology class $d_3^{0,2} c$ has also the naturality with respect to bundle maps.
Hence the class $D_c(E)$ gives rise to a characteristic class of $G$-bundles.
Let $D_c \in \HHH^3(BG;\ZZ)$ denote the universal Dixmier-Douady class of $G$-bundles.
By the canonical map $\iota^*\colon\HHH^3(BG;\ZZ) \to \HHH^3(BG^{\delta};\ZZ)$, we obtain the universal Dixmier-Douady class $\iota^*D_c$ of foliated $G$-bundles.

In section \ref{examples}, we give examples of $G$-bundles whose Dixmier-Douady class is non-zero.

\subsection{Group cohomology and Hochschild-Serre spectral sequence}
Let $G$ be a group and $A$ a right $G$-module.
A \emph{group $p$-cochain} on $G$ is a function from $p$-fold product $G^p$ to $A$.
Let $C_{\grp}^p(G;A)$ be the set of all group $p$-cochains, and the coboundary map $\delta \colon C_{\grp}^p(G;A) \to C_{\grp}^{p+1}(G;A)$ is defined by
\begin{align*}
  \delta c (g_1, \dots, g_{p+1}) = & c(g_2, \dots, g_{p+1}) + \sum_{i = 1}^{p}(-1)^i c(g_1, \dots, g_i g_{i+1}, \dots, g_{p+1}) \\
  & +(-1)^{p+1}c(g_1, \dots, g_p) \cdot g_{p+1}
\end{align*}
for $p > 0$ and $\delta = 0$ for $p = 0$.
The cohomology of the cochain complex $(C_{\grp}^*(G;A),\delta)$ is called the \emph{group cohomology of $G$ with coefficients in $A$} and denoted by $\HHH_{\grp}^*(G;A)$.
The group cohomology $\HHH_{\grp}^*(G;A)$ is known to be isomorphic to the singular cohomology $\HHH^*(BG^{\delta};\mathcal{A})$ of the classifying space $BG^{\delta}$, where $G^{\delta}$ denote the group $G$ with discrete topology and $\mathcal{A}$ is the local system on $BG^{\delta}$ induced from the $G$-module $A$ (see \cite{brown82}).

For an exact sequence $1 \to K \xrightarrow{i} \Gamma \xrightarrow{\pi} G \to 1$ of groups, there is a spectral sequence $(E_r^{p,q}, d_r^{p,q})$ with $E_2^{p,q} \cong \HHH_{\grp}^p(G;\HHH_{\grp}^q(K;A))$ which converges to $\HHH_{\grp}^*(\Gamma;A)$.
This spectral sequence is called the Hochschild-Serre spectral sequence (\cite{hochschild_serre53}).
Here, $\HHH_{\grp}^q(K;A)$ is seen as a right $G$-module as follows.
For a $q$-cochain $c$ on $K$, we define a $\Gamma$-action by
\[
  (c\cdot \gamma)(k_1, \cdots, k_q) = c(\gamma k_1 \gamma^{-1}, \cdots, \gamma k_q \gamma^{-1})\cdot \gamma.
\]
This action induces a $G$-action on $\HHH_{\grp}^q(K;A)$.
Let $\HHH_{\grp}^q(K;A)^G$ denote the $G$-invariant part.

The transgression map $d_r^{0, r-1} \colon E_r^{0, r-1} \to E_r^{0,r}$ is described as follows:
\begin{proposition}[{\cite[p.129]{hochschild_serre53}}]\label{lem:transgression_description}
  Let $f \in C_{\grp}^r(\GG;A)$ be a cochain.
  Assume that there exists a cocycle $c \in C_{\grp}^{r+1}(G;A)$ such that the pullback $\pi^* c$ is equal to $\delta f$.
  Then, the cochain $f$ defines an element $[f]_r$ of $E_r^{0, r-1}$ and the cohomology class $d_r^{0,r-1}([f]_r)$ is equal to $[\pi^* c]_r \in E_r^{r,0}$. 
  Moreover, if $E_r^{0,r-1} \cong E_2^{0,r-1} \cong \HHH_{\grp}^{r-1}(K;A)$ and $E_r^{r,0} \cong E_2^{r,0} \cong \HHH_{\grp}^r(G;A)$,
  then the cohomology classes of $\HHH_{\grp}^{r-1}(K;A)$ and $\HHH_{\grp}^r(G;A)$ corresponding to $[f]_r$ and $d_r^{0,r-1}([f]_r)$ are represented by $f|_{K^{r-1}}$ and $c$, respectively.
\end{proposition}

Proposition \ref{lem:transgression_description} for the low degree cases $d_2^{0,1}$ and $d_3^{0,2}$ is described as follows.
By definition, the cohomology $E_2^{0,1} = \HHH_{\grp}^0(G;\HHH_{\grp}^1(K;A))$ is isomorphic to $\HHH_{\grp}^1(K;A)^G$.

\begin{corollary}\label{prop:transgression}  
  Let $f \in C_{\grp}^1(\GG;A)$ be a cochain.
  Assume that there exists a cocycle $c \in C_{\grp}^2(G;A)$ satisfying $\delta f = \pi^* c$.
  Then, the pullback $i^* f \in C_{\grp}^1(K;A)$ is a cocycle representing an element of $\HHH_{\grp}^1(K;A)^{G}$ and
  \[
    d_2^{0,1}([i^* f]) = [c] \in \HHH_{\grp}^2(G;A).
  \]
\end{corollary}

If $\HHH_{\grp}^1(K;A)$ is trivial, then so is $E_2^{p, 1} = \HHH_{\grp}^p(G;\HHH_{\grp}^1(K;A))$ for any $p \geq 0$.
Hence the transgression map $d_3^{0,2}$ induces a map $d_3^{0,2} \colon \HHH_{\grp}^2(K;A)^{G} = E_{3}^{0,2} \to E_{3}^{3,0} = \HHH_{\grp}^{3}(G;A)$.
By Proposition \ref{lem:transgression_description}, we have the following. 
\begin{corollary}\label{cor:3_transgression}
  Assume that $\HHH_{\grp}^1(K;A)$ is trivial.
  Let $f \in C_{\grp}^2(\GG;A)$ be a cochain.
  Assume that there exists a cocycle $c \in C_{\grp}^3(G;A)$ satisfying $\delta f = \pi^* c$.
  Then, the pullback $i^* f \in C_{\grp}^2(K;A)$ is a cocycle representing an element of $\HHH_{\grp}^2(K;A)^{G}$ and
  \[
    d_3^{0,2}([i^* f]) = [c] \in \HHH_{\grp}^3(G;A).
  \]
\end{corollary}

\subsection{Abelian extension and second group cohomology}
An exact sequence $1 \to A \xrightarrow{i} \Gamma \xrightarrow{p} G \to 1$ of groups is called an \emph{abelian extension of} $G$ if the group $A$ is abelian.
The group $\Gamma$ acts on $A$ by conjugation, and this action induces a right $G$-action on $A$.
Thus we consider the abelian group $A$ as a right $G$-module.
It is known that the second group cohomology $\HHH_{\grp}^2(G;A)$ is isomorphic to the set of equivalence classes of abelian $A$-extensions of $G$ (see \cite{brown82}).

For an abelian $A$-extension $\Gamma$, the corresponding cohomology class $e(\Gamma)$ is defined as follows.
Take a section $s \colon G \to \Gamma$ of the projection $p \colon \Gamma \to G$.
For any $g, h \in G$, the value $s(g)s(h)s(gh)^{-1}$ is in $i(A) \cong A$.
We define a group two-cochain $c \in C_{\grp}^2(G;A)$ by
\begin{align}\label{euler_cocycle}
  c(g,h) = s(g)s(h)s(gh)^{-1}.
\end{align}
This cochain $c$ is a cocycle, and its cohomology class $[c]$ does not depend on the choice of sections.
This cocycle represents the class $e(\Gamma)$.

By the definition of the derivations of the Hochschild-Serre spectral sequence, the cohomology class $e(\Gamma)$ can be described as follows (see \cite[(2.4.4) Theorem]{MR2392026} for example).

\begin{lemma}\label{lemma:identity-Euler_class}
  Let $1 \to A \to \Gamma \xrightarrow{\pi} G \to 1$ be an abelian $A$-extension of $G$ and $(E_r^{p,q}, d_r^{p,q})$ the Hochschild-Serre spectral sequence of it.
  Then the cohomology class $e(\Gamma)$ is equal to the negative of $d_2^{0,1}(\id_A)$, where $d_2^{0,1} \colon \HHH_{\grp}^1(A;A)^G = E_2^{0,1} \to E_2^{2,0} = \HHH_{\grp}^2(G;A)$ is the derivation of the spectral sequence and $\HHH_{\grp}^1(A;A)^G$ is the set of $G$-equivariant homomorphisms.
\end{lemma}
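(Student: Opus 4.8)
The plan is to compute the derivation $d_2^{0,1}(\id_A)$ directly from its definition as the transgression in the Hochschild--Serre spectral sequence, and then to match the resulting $G$-cocycle with the extension cocycle (\ref{euler_cocycle}). Recall that this spectral sequence arises from the Hochschild--Serre filtration of the inhomogeneous bar complex $C_{\grp}^*(\Gamma;A)$, and that the differential $d_2$ is computed by a two-step zig-zag. Since $A$ is abelian it acts trivially on itself by conjugation, so $H_{\grp}^1(A;A)=\Hom(A,A)$ and $E_2^{0,1}=H_{\grp}^1(A;A)^G$ is the group of $G$-equivariant endomorphisms of $A$; the identity $\id_A$ is one such, and this is the class whose image we must evaluate.

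First I would fix a set-theoretic section $s:G\to\Gamma$ of $\pi$ with $s(1)=1$, so that the extension cocycle is $c(g,h)=s(g)s(h)s(gh)^{-1}$ as in (\ref{euler_cocycle}). Using $s$ I would lift $\id_A$ to an explicit $1$-cochain $\tilde\phi\in C_{\grp}^1(\Gamma;A)$ by setting $\tilde\phi(\gamma)=s(\pi(\gamma))^{-1}\gamma$; this value lies in $A=\ker\pi$ and restricts to the identity on $A$, so $\tilde\phi$ represents $\id_A$ at the $E_0$-level. The next step is to apply the coboundary $\delta$ of $C_{\grp}^*(\Gamma;A)$ to $\tilde\phi$ and to verify that $\delta\tilde\phi$ lies in the second filtration step, i.e. that its value $\delta\tilde\phi(\gamma_1,\gamma_2)$ depends only on $\pi(\gamma_1)$ and $\pi(\gamma_2)$. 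Writing $\gamma_i=s(g_i)a_i$ with $a_i\in A$ and expanding by means of the identity $s(g_1)s(g_2)=c(g_1,g_2)s(g_1g_2)$, the $a_i$-terms cancel precisely because $A$ is abelian, leaving a value determined purely by $c(g_1,g_2)$. Thus $\delta\tilde\phi=\pi^*\psi$ for an explicit $2$-cochain $\psi\in C_{\grp}^2(G;A)$, and by the definition of the transgression one reads off $d_2^{0,1}(\id_A)=[\psi]$.

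It then remains to identify $[\psi]$ with $-e(\Gamma)=-[c]$. The computation sketched above produces $\psi$ equal to $-c$, up to the bookkeeping of the conjugation (right $G$-module) action on the coefficients, and $\psi$ is automatically a cocycle since it is pulled back from the coboundary $\delta\tilde\phi$ and $\pi^*$ is injective on cochains. I expect the main obstacle to be precisely this sign and action bookkeeping: one must reconcile the sign in the coboundary formula, the right $G$-module structure $a\cdot g$ induced by conjugation, and the sign built into the definition of the spectral-sequence differential. It is exactly the confluence of these conventions that produces the minus sign in the statement, yielding $d_2^{0,1}(\id_A)=-e(\Gamma)$, equivalently $e(\Gamma)=-d_2^{0,1}(\id_A)$.

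As a conceptual cross-check I would also interpret the result through the five-term exact sequence of the spectral sequence, in which $d_2^{0,1}:H_{\grp}^1(A;A)^G\to H_{\grp}^2(G;A)$ appears as the connecting map. From this viewpoint $d_2^{0,1}(\id_A)$ is the obstruction to extending the homomorphism $\id_A:A\to A$ to a crossed homomorphism $\Gamma\to A$; such a crossed homomorphism exists exactly when $\tilde\phi$ can be corrected to a genuine cocycle on $\Gamma$, which in turn forces the section $s$ to be cohomologous to a homomorphism. Hence the obstruction is carried by the defect $c(g,h)=s(g)s(h)s(gh)^{-1}$ of $s$, confirming that $d_2^{0,1}(\id_A)$ and $e(\Gamma)$ agree up to the sign tracked above.
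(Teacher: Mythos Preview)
The paper does not actually prove this lemma in the text; it states it with a reference to \cite{maruyama20-3}. However, the paper does spell out, in the proof of Lemma~\ref{lemma:commuting_diagram}, exactly how it understands the derivation $d_2^{0,1}$: given a section $s:G\to A_G(P)$, one extends a $G$-equivariant homomorphism $\phi$ on the kernel to the cochain $\phi_s(F)=\phi(F\circ s(p(F))^{-1})$ on the total group, and then $d_2^{0,1}[\phi]$ is represented by $\delta\phi_s$ viewed as a cocycle on $G$. Your proposal is precisely this recipe specialised to $\phi=\id_A$: your lift $\tilde\phi(\gamma)=s(\pi(\gamma))^{-1}\gamma$ is the same object as the paper's $\phi_s$ (up to the harmless choice between $\gamma\,s(\pi(\gamma))^{-1}$ and $s(\pi(\gamma))^{-1}\gamma$, which differ by the $G$-action and hence give cohomologous results), and the identification of $\delta\tilde\phi$ with $\pi^*(-c)$ is exactly the computation that yields $d_2^{0,1}(\id_A)=-e(\Gamma)$.

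So your approach is correct and is the same one implicit in the paper. Your remark that the only delicate point is the sign/action bookkeeping is accurate, and your five-term exact sequence interpretation is a legitimate sanity check. One small suggestion: rather than writing $\gamma_i=s(g_i)a_i$ and cancelling, it is slightly cleaner to verify directly that $\delta\tilde\phi(\gamma_1,\gamma_2)$ is unchanged when either $\gamma_i$ is multiplied on the right by an element of $A$; this immediately shows $\delta\tilde\phi$ descends to $G\times G$, after which evaluating at $(s(g_1),s(g_2))$ gives the value $-c(g_1,g_2)\cdot(g_1g_2)$ (or $-c(g_1,g_2)$, depending on which of the two lifts you use), and the conjugation factor is a coboundary.
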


\subsection{Cohomology of the gauge group}\label{subsection:cohomology}
By the fibration
\[
  0 \to \ZZ \to C(X,\RR) \overset{\pi}{\to} C(X,S^1) (\cong \Gau(P)) \to 0
\]
and the contractibility of $C(X,\RR)$, the gauge group $\Gau(P)$ is turned out to be an Eilenberg-MacLane space $K(\ZZ,1)$.
Hence the classifying space $B\Gau(\ZZ)$ is $K(\ZZ, 2)$, and therefore we have
\[
  \HHH^1(B\Gau(P);\ZZ) = 0 \ \text{ and } \  \HHH^2(B\Gau(P);\ZZ) = \ZZ.
\]

Next we show $\HHH_{\grp}^1(\Gau(P);\ZZ) = 0$.
Note that the first cohomology $\HHH_{\grp}^1(\Gau(P);\ZZ)$ is isomorphic to the group $\Hom(\Gau(P),\ZZ)$ of homomorphisms from $\Gau(P)$ to $\ZZ$.
Since $\Gau(P)$ is a divisible group, we have $\HHH_{\grp}^1(\Gau(P);\ZZ) = \Hom(\Gau(P),\ZZ) = 0$.

Let $i \colon \RR \to C(X,\RR)$ and $j \colon S^1 \to \Gau(P) = C(X,S^1)$ be the inclusions, and $\ev_x^{\RR} \colon C(X,\RR) \to \RR$ and $\ev_x \colon C(X,S^1) \to S^1$ denote the evaluation maps at $x \in X$.
Then we have the following commutative diagram of fibrations
\[
\xymatrix{
B\ZZ \ar[r] \ar@{=}[d] &B\RR \ar[r] \ar[d]^{i} &BS^1 \ar[d]^{j}\\
B\ZZ \ar[r] \ar@{=}[d] &BC(X,\RR) \ar[r] \ar[d]^{\ev_x^{\RR}} & B\Gau(P) \ar[d]^{\ev_{x}}\\
B\ZZ \ar[r] &B\RR \ar[r] &BS^1,
}
\]
where we use, by abuse of notation, the symbols $i,j,\ev_x^{\RR}$, and $\ev_x$ to denote the induced maps between the classifying spaces.
Since the composition of $j$ and $\ev_x$ is equal to the identity homomorphism, so is the composition of $\ev_x^* \colon \HHH^*(BS^1;\ZZ) \to \HHH^*(B\Gau(P);\ZZ)$ and $j^* \colon \HHH^*(B\Gau(P);\ZZ) \to \HHH^*(BS^1;\ZZ)$.
Hence, the first Chern class of $B\ZZ \to BC(X,\RR) \to B\Gau(P)$ is equal to $\ev_x^* c_1$ and non-zero, where $c_1 \in \HHH^2(BS^1;\ZZ)$ is the universal first Chern class.

Let $e(\RR) \in \HHH_{\grp}^2(S^1;\ZZ)$ be the group cohomology class corresponding to the central extension $0 \to \ZZ \to \RR \to S^1 \to 1$.
It is known that the class $c_1 \in \HHH^2(BS^1;\ZZ)$ corresponds to $e(\RR)$ under the canonical isomorphism
\[
  \iota^* \colon \HHH^2(BS^1;\ZZ) \to \HHH^2(B(S^1)^{\delta};\ZZ) = \HHH_{\grp}^2(S^1;\ZZ),
\]
that is, $\iota^*c_1 = e(\RR)$.

\section{Proofs}
Recall that $c \in \HHH^2(X;\ZZ)$ is a non-zero class and $G = \Homeo(X,c)$ is the group of $c$-preserving homeomorphisms of $X$.
\begin{lemma}\label{lemma:exactness_of_gauge_extension}
  Let $P \to X$ be a principal $S^1$-bundle with the first Chern class $c \in \HHH^2(X;\ZZ)$.
  Let $\Aut(P)$ be the group of bundle automorphisms of $P$.
  Then the canonical projection $p\colon\Aut(P) \to \Homeo(X)$ gives the surjection $\Aut(P) \to G$.
  In particular, we have the following exact sequence
  \begin{align}\label{gau_extension_of_G}
    0 \to \Gau(P) \to \Aut(P) \to G \to 1.
  \end{align}
\end{lemma}

\begin{remark}
  The exact sequence above gives rise to a Serre fibration with respect to the compact-open topology (see \cite{yamanoshita95}).
\end{remark}

\begin{proof}[Proof of lemma $\ref{lemma:exactness_of_gauge_extension}$]
  For a bundle automorphism $F \colon P \to P$, the induced homeomorphism $p(F)\colon X \to X$ preserves the class $c$ by the naturality of the Chern class.
  Hence the homeomorphism $p(F)$ is in $G$.
  For a homeomorphism $f \in G$, let $f^*P \to X$ denote the pullback bundle of $P$ by $f$.
  Since the first Chern class of $f^*P \to X$ is equal to $f^*c = c$ and the first Chern class completely determines the isomorphic class of principal $S^1$-bundles, the bundles $f^* P \to X$ is isomorphic to $P\to X$.
  Hence we have the following diagram
  \[
  \xymatrix{
  P \ar[r]^-{\cong} \ar[d] & f^*P \ar[r] \ar[d] & P \ar[d]\\
  X \ar@{=}[r]^-{\id_X} &X \ar[r]^{f} & X,
  }
  \]
  and this gives a bundle automorphism of $P$ that covers the homeomorphism $f$.
\end{proof}

\begin{remark}\label{rem:conj_action}
  The $G$-action on $C(X,S^1) = \Gau(P)$ induced from the $\Aut(P)$-conjugation action is given by the pullback, that is, $\lambda \cdot g = g^* \lambda = \lambda \circ g$ for $\lambda \in C(X,S^1)$ and $g \in G$.
\end{remark}

Let $(E_r^{'p,q}, d_r^{'p,q'})$ and $(E_r^{''p,q}, d_r^{''p,q})$ be the Hochschild-Serre spectral sequences of exact sequence (\ref{gau_extension_of_G}) with coefficients in $\Gau(P)$ and $\ZZ$, respectively.
Here $\ZZ$ is regarded as a trivial module.
Then
\[
  E_2^{'0,1} = \HHH_{\grp}^0(G;\HHH_{\grp}^1(\Gau(P);\Gau(P))) = \HHH_{\grp}^1(\Gau(P);\Gau(P))^{G},
\]
where $\HHH_{\grp}^1(\Gau(P);\Gau(P))^{G}$ is the group of $G$-equivariant homomorphisms on $\Gau(P)$.
Since $\HHH_{\grp}^1(\Gau(P);\ZZ)$ is trivial (Subsection \ref{subsection:cohomology}), so is $E_2^{''p,1} = \HHH_{\grp}^p(G;\HHH_{\grp}^1(\Gau(P);\ZZ))$ for all $p \geq 0$.
Hence $d_2^{''0,2} = 0$, and we have
\[
  E_3^{''0,2} = E_2^{''0,2} = \HHH_{\grp}^0(G;\HHH_{\grp}^2(\Gau(P);\ZZ)) = \HHH_{\grp}^2(\Gau(P);\ZZ)^{G},
\]
where $\HHH_{\grp}^2(\Gau(P);\ZZ)^{G}$ is the group of $G$-invariant cohomology classes.

\begin{lemma}\label{lemma:id_connecting_hom}
  The connecting homomorphism
  \[
    \mathbf{d} \colon \HHH_{\grp}^1(\Gau(P);\Gau(P)) \to \HHH_{\grp}^2(\Gau(P);\ZZ)
  \]
  with respect to exact sequence (\ref{coeff_ex_seq_gauge}) of coefficients induces the map
  \[
    \mathbf{d} \colon \HHH_{\grp}^1(\Gau(P);\Gau(P))^{G} \to \HHH_{\grp}^2(\Gau(P);\ZZ)^{G}.
  \]
\end{lemma}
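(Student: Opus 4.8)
The plan is to show that the connecting homomorphism $\delta$ is $G$-equivariant with respect to the $G$-actions on $H_{\grp}^1(\Gau(P);\Gau(P))$ and $H_{\grp}^2(\Gau(P);\mathbb{Z})$ coming from the extension (\ref{gau_extension_of_G}); once this is established the conclusion is immediate, since a $G$-equivariant homomorphism carries the submodule of $G$-invariants into the submodule of $G$-invariants, and these invariants are precisely the terms $E_2^{'0,1} = H_{\grp}^1(\Gau(P);\Gau(P))^G$ and $E_2^{''0,2} = H_{\grp}^2(\Gau(P);\mathbb{Z})^G$ that appear in the statement.

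First I would make the $G$-action explicit on the whole coefficient sequence (\ref{coeff_ex_seq_gauge}). The conjugation action of $A_G(P)$ on its abelian normal subgroup $\Gau(P) = C(X,S^1)$ factors through $G$ and is, as already observed, the pullback automorphism $\alpha_g(\lambda) = \lambda \circ g$. The same rule defines a $G$-action on $C(X,\mathbb{R})$ by $f \mapsto f \circ g$, and the trivial action on the constants $\mathbb{Z} \subset C(X,\mathbb{R})$. I would then check that for each $g \in G$ precomposition with $g$ commutes with both maps of (\ref{coeff_ex_seq_gauge}): it fixes constant functions and commutes with postcomposition by $\mathbb{R} \to S^1$. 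Hence each $g$ determines an automorphism of the short exact sequence (\ref{coeff_ex_seq_gauge}) of coefficient modules, compatible with the group automorphism $\alpha_g$ of the group $\Gau(P)$ over which the cohomology is taken.

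Next I would recall that the $G$-action on $H_{\grp}^q(\Gau(P);A)$ is the one functorially induced by the pair consisting of the group automorphism $\alpha_g$ of $\Gau(P)$ and the compatible coefficient automorphism $\beta_g$ of $A$ (here $A = \Gau(P)$ with $\beta_g = \alpha_g$, and $A = \mathbb{Z}$ with $\beta_g = \id$). The crucial step is then the naturality of the connecting homomorphism: for a morphism of short exact sequences of coefficient modules that is compatible with a fixed automorphism of the underlying group, the square relating the two copies of $\delta$ commutes. Applying this to the automorphism of (\ref{coeff_ex_seq_gauge}) determined by $g$ yields, for every $g \in G$, the commutativity of $\delta$ with the induced $g$-action, which is exactly the asserted $G$-equivariance of $\delta$.

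I expect the main obstacle to lie in this last step, namely verifying the naturality of $\delta$ in a setting where the $G$-action \emph{simultaneously} reindexes cochains through the group automorphism $\alpha_g$ and transforms coefficients through $\beta_g$. At the cochain level $\delta$ is computed by lifting a $\Gau(P)$-valued cocycle to a $C(X,\mathbb{R})$-valued cochain and taking its coboundary, which lands in the constants $\mathbb{Z}$. The point to confirm is that applying the $g$-action to a chosen lift produces a legitimate lift of the $g$-transformed cocycle, which uses precisely the $G$-equivariance of the projection $C(X,\mathbb{R}) \to \Gau(P)$ from the previous paragraph; the two resulting classes then coincide by the well-definedness (independence of the lift) of $\delta$.
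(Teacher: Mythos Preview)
Your argument is correct and in fact proves the stronger statement that $\delta$ is $G$-equivariant on all of $H_{\grp}^1(\Gau(P);\Gau(P))$, not merely that it carries invariants to invariants. The route, however, differs from the paper's. The paper does not invoke the abstract naturality of $\delta$; instead it fixes a point $x \in X$, builds an explicit set-theoretic section $s_x : C(X,S^1) \to C(X,\mathbb{R})$ by choosing the lift with $s_x(\lambda)(x) \in [0,1)$, writes down the resulting cocycle $c_x = \delta(s_x \circ \phi)$, and then computes directly that $c_x \cdot g = c_y$ with $y = g(x)$. Since the class $\delta[\phi]$ does not depend on the choice of section, $[c_x] = [c_y]$ and invariance follows. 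Your approach is cleaner and more conceptual; the paper's has the side benefit that the explicit section $s_x$ and the cocycle formulas are reused verbatim in the proof of the next lemma (the commutativity of diagram~(\ref{comm_diag:group_coh_conn_hom_derivation})), so if you adopt your abstract argument here you will still need to introduce $s_x$ later.
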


\begin{proof}
  Let $[\phi]$ be an element of $\HHH_{\grp}^1(\Gau(P);\Gau(P))^{G}$.
  Note that the cocycle $\phi$ is a $G$-equivariant homomorphism, that is, the following holds
  \[
    \phi(\lambda \cdot g) = \phi(\lambda) \cdot g = \phi(\lambda) \circ g
  \]
  for any $\lambda \in \Gau(P)$ and $g \in G$.
  For a point $x$ in $X$, let $s_x(\lambda) \colon X \to \RR$ denote the lift of $\lambda \colon X \to S^1 = \RR/\ZZ$ satisfying $s_x(\lambda) (x) \in [0,1)$.
  This map $s_x$ defines a section
  \[
    s_x \colon (\Gau(P) =) C(X,S^1) \to C(X,\RR).
  \]
  Then, by the definition of the connecting homomorphism, a cocycle $c_x \in C_{\grp}^2(\Gau(P);\ZZ)$ representing $\mathbf{d} [\phi] \in \HHH_{\grp}^2(\Gau(P);\ZZ)$ is given by
  \begin{align*}
    c_x(\lambda_1,\lambda_2) &= \delta (s_x \circ \phi)(\lambda_1,\lambda_2)\\
    &= s_x (\phi(\lambda_2)) - s_x (\phi(\lambda_1 \lambda_2)) + s_x (\phi(\lambda_1))
  \end{align*}
  for $\lambda_1, \lambda_2 \in \Gau(P)$.
  For $g \in G$, we set $y = g(x)$.
  Note that $s_x(\lambda \circ g) = (s_{y}(\lambda))\circ g$ for any $\lambda \in \Gau(P)$.
  Since the $G$-action $\Gau(P)$ is given by pullback (Remark \ref{rem:conj_action}), we have
  \begin{align*}
    (c_x \cdot g)(\lambda_1,\lambda_2) &= c_x(\lambda_1 \circ g, \lambda_2 \circ g)\\
    &= s_x (\phi(\lambda_2 \circ g)) - s_x (\phi((\lambda_1 \lambda_2)\circ g)) + s_x (\phi(\lambda_1\circ g))\\
    &=s_x (\phi(\lambda_2)\circ g) - s_x (\phi(\lambda_1 \lambda_2)\circ g) +s_x (\phi(\lambda_1)\circ g)\\
    &=(s_y(\phi(\lambda_2)) - s_y(\phi(\lambda_1 \lambda_2)) + s_y(\phi(\lambda_1)))\circ g\\
    &=s_y(\phi(\lambda_2)) - s_y(\phi(\lambda_1 \lambda_2)) + s_y(\phi(\lambda_1))\\
    &= c_y(\lambda_1, \lambda_2).
  \end{align*}
  Since the image $\mathbf{d}[\phi]$ of the connecting homomorphism is independent of the choice of sections, we have
  \[
    (\mathbf{d} [\phi])\cdot g = [c_x \cdot g] = [c_y] = \mathbf{d} [\phi].
  \]
  Hence the cohomology class $\mathbf{d} [\phi]$ is in $\HHH_{\grp}^2(\Gau(P);\ZZ)^{G}$.
\end{proof}

\begin{remark}\label{remark:ev}
  Let $e(C(X,\RR)) \in \HHH_{\grp}^2(\Gau(P);\ZZ)$ be the class corresponding to the central extension $0 \to \ZZ \to C(X,\RR) \to \Gau(P) \to 0$.
  Then, by the direct computation, we can show that
  \[
    -\mathbf{d} (\id_{\Gau(P)}) = e(C(X,\RR)).
  \]
  By the commutative diagram of central extensions
  \[
  \xymatrix{
  \ZZ \ar[r] \ar@{=}[d] &C(X,\RR) \ar[r] \ar[d]^{\ev_x^{\RR}} & \Gau(P) \ar[d]^{\ev_{x}}\\
  \ZZ \ar[r] &\RR \ar[r] &S^1,
  }
  \]
  we have $e(C(X, \RR)) = \ev_x^*e(\RR) = \ev_x^* \iota^* c_1$.
\end{remark}

\begin{lemma}\label{lemma:commuting_diagram}
  The following diagram
  \begin{align}\label{comm_diag:group_coh_conn_hom_derivation}
    \xymatrix{
    \HHH_{\grp}^1(\Gau(P);\Gau(P))^{G} = E_2^{'0,1} \ar[r]^-{d_2^{'0,1}} \ar[d]^{\mathbf{d}} & E_2^{'2,0} = \HHH_{\grp}^2(G;\Gau(P)) \ar[d]^{\mathbf{d}} \\
     \HHH_{\grp}^2(\Gau(P);\ZZ)^{G} = E_3^{''0,2} \ar[r]^-{-d_3^{''0,2}} & E_3^{''3,0} = \HHH_{\grp}^3(G;\ZZ)
    }
  \end{align}
  commutes, where the vertical maps $\mathbf{d}$ are the connecting homomorphisms. 
\end{lemma}
\begin{proof}
  Let $[\phi]$ be an element of $\HHH_{\grp}^1(\Gau(P);\Gau(P))^{G}$ and $s\colon G \to \Aut(P)$ a section such that $s(\id_X) = \id_P$.
  First, we give a cocycle representing $\mathbf{d} d_2^{'0,1}[\phi]$.
  Let us define a cochain $\phi_s \colon \Aut(P) \to \Gau(P)$ by
  \[
    \phi_s(F) = \phi(F \circ (s(p(F)))^{-1}) \in \Gau(P),
  \]
  where $p \colon \Aut(P) \to G$ is the projection.
  Then, a cocycle $c \in C_{\grp}^3(G;\Gau(P))$ is defined by
  \[
    p^* c = \delta \phi_s.
  \]
  Indeed, $\delta \phi_s(F_1, F_2) \in \Gau(P)$ \emph{does} depend on $p(F_1)$ and $p(F_2) \in G$ since the restriction $\phi_s|_{\Gau(P)}$ is equal to a homomorphism $\phi$. 
  Hence, by Corollary \ref{prop:transgression}, the cocycle $c$ represents $d_2^{'0,1}[\phi]$, and a cocycle representing $\mathbf{d} d_2^{'0,1}[\phi]$ is given by
  \begin{align}\label{cocycle_deri_connecting_hom}
    \delta (s_x \circ c) \in C_{\grp}^3(G;\ZZ),
  \end{align}
  where $s_x \colon \Gau(P) \to C(X,\RR)$ is the section defined in the proof of Lemma \ref{lemma:id_connecting_hom}.

  Next, we give a cocycle representing $d_3^{''0,2}\mathbf{d} [\phi]$.
  A cocycle of $\mathbf{d} [\phi]$ is given by the coboundary $c_x = \delta (s_x \circ \phi) \in C_{\grp}^2(\Gau(P);\ZZ)$.
  We set
  \[
    c'_x = \delta (s_x \circ \phi_s) - p^*(s_x \circ c) \in C_{\grp}^2(\Aut(P);C(X,\RR)).
  \]
  Then, the cochain $c'_x$ is in $C_{\grp}^2(\Aut(P);\ZZ)$ since it is trivial under the projection $C(X,\RR) \to C(X, S^1)$, and the restriction of $c'_x$ on $\Gau(P)$ is equal to $c_x$.
  Moreover, since the coboundary $\delta c'_x$ is equal to $-p^*\delta(s_x \circ c)$, a cocycle of $d_3^{''0,2}\mathbf{d} [\phi]$ is given by
  \begin{align}\label{cocycle_connecting_hom_deri}
    - \delta (s_x \circ c) \in C_{\grp}^3(G;\ZZ)
  \end{align}
  by Corollary \ref{cor:3_transgression}.
  By (\ref{cocycle_deri_connecting_hom}) and (\ref{cocycle_connecting_hom_deri}),
  we have $\mathbf{d} d_2^{'0,1}[\phi] = -d_3^{''0,2} \mathbf{d} [\phi]$ and the lemma follows.
\end{proof}

\begin{lemma}
  Let $X \to E \to BG$ be the universal $G$-bundle and $B\Gau(P) \to B\Aut(P) \to BG$ the fibration of classifying spaces of exact sequence (\ref{gauge_extension}).
  Then there exists a commutative diagram
  \begin{align}\label{diagram:MtoBS^1}
    \xymatrix{
    X \ar[r] \ar[d]^{f} &E \ar[r] \ar[d]^{\phi} &BG \ar@{=}[d]\\
    B\Gau(P) \ar[r] &B\Aut(P) \ar[r] &BG,
    }
  \end{align}
  where the map $f \to B\Gau(P)$ is the composition of the classifying map $X \to BS^1$ and the map $BS^1 \to B\Gau(P)$ induced from the inclusion $S^1 \to \Gau(P)$.
\end{lemma}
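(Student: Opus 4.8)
The plan is to realise $\phi$ as the classifying map of a principal $A_G(P)$-bundle over $E$ refining the tautological $G$-bundle, equivalently as a lift of the projection $\pi\colon E\to BG$ through the fibration $BA_G(P)\to BG$. I would model the universal $G$-bundle as $E=EG\times_G X$, so that $EG\times X\to E$ with the diagonal $G$-action is the tautological principal $G$-bundle classified by $\pi$. Giving a map $\phi\colon E\to BA_G(P)$ over $BG$ is then the same as lifting the structure group of this bundle from $G$ to $A_G(P)$ along the surjection in (\ref{gau_extension_of_G}). To make this precise I would form the Borel construction $\widetilde E=EA_G(P)\times_{A_G(P)}X$, where $A_G(P)$ acts on $X$ through $A_G(P)\to G\subset\Homeo(X)$, together with the map $\widetilde E\to E$ induced by an $A_G(P)$-equivariant map $EA_G(P)\to EG$. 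Since the $A_G(P)$-action on $X$ factors through $G$, this exhibits $\widetilde E\cong E\times_{BG}BA_G(P)$, so $\widetilde E\to E$ is the pullback of $BA_G(P)\to BG$ and hence a fibration with fibre $B\Gau(P)\simeq K(\mathbb Z,2)$. Under this identification a map $\phi$ as required is exactly a section of $\widetilde E\to E$.

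To pin down the fibre behaviour I would use the tautological $S^1$-bundle on $\widetilde E$. The group $A_G(P)$ acts on the total space $P$ commuting with the principal $S^1$-action and covering its action on $X$, so $\mathbf Q=EA_G(P)\times_{A_G(P)}P$ is a principal $S^1$-bundle over $\widetilde E$ whose restriction to each copy of $X$ is $P$. Over the fibre $X\subset E$ one has $\widetilde E|_X\cong X\times B\Gau(P)$, and along the $B\Gau(P)$-direction $\mathbf Q$ is the bundle associated to the evaluation $\ev_x\colon\Gau(P)\to S^1$; this is exactly the geometry behind the definition of $f$ as $X\xrightarrow{\chi_P}BS^1\to B\Gau(P)$, where $\chi_P$ classifies $P$ and $BS^1\to B\Gau(P)$ is induced by $S^1\hookrightarrow\Gau(P)$. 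Thus a section of $\widetilde E\to E$ that restricts to $f$ on the fibre is the same datum as the required map $\phi$ with $\phi|_X=f$, the fibre map into $BA_G(P)$ being the composite of $f$ with the fibre inclusion $B\Gau(P)\hookrightarrow BA_G(P)$.

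The hard part will be to produce the section and to arrange that it restricts to $f$. As the fibre of $\widetilde E\to E$ is $K(\mathbb Z,2)$, there is a single obstruction to extending the fibre section $f$ over all of $E$, living in $H^3(E,X;\mathbb Z)$, whose image in $H^3(E;\mathbb Z)$ is the pullback $\pi^{*}$ of the $k$-invariant of $BA_G(P)\to BG$. I expect to control this through the transgression: the class $e_c=-d_3^{0,2}c$ is by construction transgressive in the Serre spectral sequence of $X\to E\to BG$, so it pulls back to zero on $E$, which gives the existence of a section; and naturality of the transgression, together with $\chi_P^{*}$ of the generator $u\in H^2(B\Gau(P);\mathbb Z)$ being $c$, should identify the fibrewise part of the obstruction in $H^3(E,X;\mathbb Z)$ with the image of $d_3^{0,2}c$. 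The delicate bookkeeping is precisely the comparison of these two transgressions — of $c$ in the bundle $X\to E\to BG$ and of $u$ in the extension $B\Gau(P)\to BA_G(P)\to BG$, related through the fibre map $f$ — and verifying that they cancel so that the obstruction vanishes and the chosen section restricts to $f$. This is where I anticipate spending the most care.
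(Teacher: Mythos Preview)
Your approach is genuinely different from the paper's. The paper constructs $\phi$ directly: it exhibits a concrete principal $A_G(P)$-bundle $EG\times P_G\to E$ over a specific model of the universal $X$-bundle, and $\phi$ is then simply the classifying map of this bundle, adjusted via the map $\Phi(a,p)=(a,\Psi(a,p))$ so as to cover the identity on $BG$. The fibre behaviour is read off from the explicit bundle; no obstruction theory or transgression comparison enters.

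Your route instead reduces the problem to producing a section of the $K(\mathbb Z,2)$-fibration $\widetilde E\to E$ with prescribed restriction $f$ on the fibre. That is a correct reformulation, but the step you yourself flag as ``delicate'' is a genuine gap. The obstruction to a section is $\pi^*\kappa\in H^3(E;\mathbb Z)$, where $\kappa\in H^3(BG;\mathbb Z)$ is the $k$-invariant of $B\Gau(P)\to BA_G(P)\to BG$. Your argument that ``$e_c$ is transgressive for $X\to E\to BG$, so it pulls back to zero on $E$'' only shows $\pi^*e_c=0$; to conclude $\pi^*\kappa=0$ you need $\kappa$ to lie in the image of $d_3^{0,2}\colon H^2(X;\mathbb Z)\to H^3(BG;\mathbb Z)$ for the \emph{other} fibration $X\to E\to BG$. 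Identifying $\kappa$ with $\pm e_c$ (equivalently, matching the two transgressions via $f^*$) is exactly what this lemma is used for in the proof of Theorem~\ref{main_thm}, so appealing to it here is circular. Your own bundle $\mathbf Q=EA_G(P)\times_{A_G(P)}P$ is in fact close in spirit to the paper's direct construction: rather than treating it as auxiliary geometry for an obstruction computation, you could use it (together with the free $A_G(P)$-action on $EA_G(P)\times P$) to write down the required principal $A_G(P)$-bundle and its classifying map explicitly, which is what the paper does and which avoids the obstruction argument altogether.
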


\begin{proof}
  Let $P_G = P \times_{\Gau(P)} \Gau(P) \to X$ be the associated bundle.
  Then the bundle
  \[
    E = EG \times_{\Aut(P)} P_G \to BG
  \]
  gives one of the models of the universal $G$-bundle with fiber $X$.
  Since $EG \times P_G \to E$ is a principal $\Aut(P)$-bundle, there exists a bundle map to the universal principal $\Aut(P)$-bundle
  \[
  \xymatrix{
  EG \times P_G \ar[r]^-{\Psi} \ar[d] &E\Aut(P) \ar[d]\\
  E \ar[r]^-{\psi} & B\Aut(P).
  }
  \]
  Define a map $\Phi \colon EG \times P_G \to EG \times E\Aut(P)$ by $\Phi(a,p) = (a, \Psi(a,p))$.
  Then the map $\Phi$ gives a bundle map from $EG \times P_G \to E$ to $EG \times E\Aut(P) \to EG \times_{\Aut(P)} E\Aut(P) = B\Aut(P)$.
  Let $\phi \colon E \to B\Aut(P)$ be the classifying map that is covered by $\Phi$.
  Then the map $\phi$ covers the identity on $BG$.
  Moreover, the restriction $\phi|_{X} = f\colon X \to B\Gau(P)$ gives rise to the classifying map of the bundle $P_G \to X$.
\end{proof}

\begin{proof}[Proof of Theorem $\ref{main_thm}$]
  Let us consider the Serre spectral sequences $E_{r}^{p, q}$ and $E_{r}^{'''p,q}$ of the fibrations $X \to E \to BG$ and $B\Gau(P) \to B\Aut(P) \to BG$ respectively.
  Since $\HHH^1(B\Gau(P);\ZZ) = 0$ by Subsection \ref{subsection:cohomology}, we have $E_3^{'''0,2} = E_2^{'''0,2} = \HHH^2(B\Gau(P);\ZZ)$ and $E_3^{'''3,0} = E_2^{'''3,0} = \HHH^3(BG;\ZZ)$.
  By the naturality of the Serre spectral sequence with respect to commutative diagram (\ref{diagram:MtoBS^1}), we have the commutative diagram
  \[
  \xymatrix{
  \HHH^2(B\Gau(P);\ZZ) = E_3^{'''0,2} \ar[r]^-{d_3^{'''0,2}} \ar[d]^{f^*}& E_3^{'''3,0} = \HHH^3(BG;\ZZ) \ar@{=}[d]\\
  \HHH^2(X; \ZZ) = E_3^{0,2} \ar[r]^-{d_3^{0,2}} & E_3^{3,0} = \HHH^3(BG;\ZZ).
  }
  \]
  Then we have $f^* \ev_x^* c_1 = c \in \HHH^2(X;\ZZ)$ since $f\colon X \to B\Gau(P)$ is the composite of the classifying map $X \to BS^1$ and the map $j \colon BS^1 \to B\Gau(P)$ (Subsection \ref{subsection:cohomology}).
  Hence, the universal characteristic class $D_c = d_3^{0,2}(c) \in \HHH^3(BG;\ZZ)$ is equal to $-d_3^{'''0,2} \ev_x^* c_1$.
  Let $E_r^{''p,q}$ denote the Serre spectral sequence of the fibration $B\Gau(P)^{\delta} \to B\Aut(P)^{\delta} \to BG^{\delta}$ (or, equivalently, the Hochschild-Serre spectral sequence of $1 \to \Gau(P) \to \Aut(P) \to G \to 1$).
  By the commutative diagram
  \[
  \xymatrix{
  B\Gau(P)^{\delta} \ar[r] \ar[d] & B\Aut(P)^{\delta} \ar[r] \ar[d] & BG^{\delta} \ar[d]\\
  B\Gau(P) \ar[r] & B\Aut(P) \ar[r] & BG
  }
  \]
  and the naturality, we have the commutative diagram
  \[
  \xymatrix{
  \HHH^2(B\Gau(P);\ZZ) \ar[r]^-{d_3^{'''0,2}} \ar[d]^{\iota^*}& \HHH^3(BG;\ZZ) \ar[d]^{\iota^*}\\
  \HHH_{\grp}^2(\Gau(P); \ZZ) \ar[r]^-{d_3^{''0,2}} &\HHH_{\grp}^3(G;\ZZ),
  }
  \]
  where we identify the group cohomology with the singular cohomology of classifying space of discrete groups.
  Then we have
  \begin{align*}
    \iota^* D_c = -\iota^* d_3^{'''0,2} \ev_x^* c_1 = -d_3^{''0,2} \iota^* \ev_x^* c_1 = -d_3^{''0,2} \ev_x^* \iota^* c_1.
  \end{align*}
  By commutative diagram (\ref{comm_diag:group_coh_conn_hom_derivation}) and lemma \ref{lemma:identity-Euler_class}, we have
  \begin{align*}
    \mathbf{d} e(\Aut(P)) = -\mathbf{d} d_2^{'0,1}(\id_{\Gau(P)}) = d_3^{''0,2} \mathbf{d} (\id_{\Gau(P)}).
  \end{align*}
  Since the class $\ev_x^*\iota^*c_1$ is equal to $-\mathbf{d}(\id_{\Gau(P)})$ by Remark \ref{remark:ev}, we have
  \begin{align*}
    \iota^*D_c = \mathbf{d} e(A_G(P)).
  \end{align*}
\end{proof}

\section{Central $S^1$-extension of $G$}\label{section:S^1-extension}
In this section, we assume that the connected topological space $X$ admits the universal covering space $\widetilde{X}$.
We take a non-zero cohomology class $c \in \HHH^2(X;\ZZ)$ which is equal to zero in $\HHH^2(X;\RR)$.

\subsection{Construction of a central $S^1$-extension}
By the cohomology long exact sequence
\[
  \cdots \to \HHH^1(X;\RR) \to \HHH^1(X;S^1) \overset{\mathbf{d}}{\to} \HHH^2(X;\ZZ) \to \HHH^2(X;\RR) \to \cdots,
\]
there exists an element $\rho \in \HHH^1(X;S^1)$ such that $\mathbf{d} (\rho) = c$.
By the isomorphism $\HHH^1(X;S^1) \cong \Hom(\pi_1(X);S^1)$, we regard the class $\rho$ as a homomorphism $\rho \colon \pi_1(X) \to S^1$.
We set $P_{\rho}^{\delta} = \widetilde{X}\times_{\rho} (S^1)^{\delta}$.
Then $P_{\rho}^{\delta} \to X$ is a foliated $S^1$-bundle with the holonomy homomorphism $\rho$.
Note that the group $\Homeo(X,\rho)$ of $\rho$-preserving homeomorphisms of $X$ is equal to $G$ since $\mathbf{d}$ is injective.
\begin{lemma}
  Let $\Aut(P_{\rho}^{\delta})$ denote the bundle automorphisms of $P_{\rho}^{\delta}$.
  Then the canonical projection $\pi\colon\Aut(P_{\rho}^{\delta}) \to \Homeo(X)$ induces a surjection
  $\Aut(P_{\rho}^{\delta}) \to G$ and its kernel is isomorphic to $S^1$.
  In particular, we obtain an exact sequence
  \begin{align}\label{cent_S1_extension}
    1 \to S^1 \to \Aut(P_{\rho}^{\delta}) \to G \to 1.
  \end{align}
\end{lemma}
\begin{proof}
  We regard $G$ as the group of $\rho$-preserving homeomorphisms of $X$.
  First, we show that the image $\pi(\Aut(P_{\rho}^{\delta}))$ is contained in $G$.
  Take a bundle automorphism $F \in \Aut(P_{\rho}^{\delta})$ that covers a homeomorphism $f \colon X \to X$.
  Let $\gamma \colon [0,1] \to X$ be a loop and $\widetilde{\gamma}\colon[0,1] \to P_{\rho}^{\delta}$ a lift of $\gamma$.
  Then $F \circ \widetilde{\gamma}\colon [0,1] \to P_{\rho}^{\delta}$ is a lift of the loop $f \circ  \gamma$.
  Since $\rho \colon \pi_1(X) \to (S^1)^{\delta}$ is the holonomy of $P^{\delta}$, the value $\rho(f \circ \gamma) \in (S^1)^{\delta}$ is given by
  \begin{align}\label{holonomy_calc}
    F(\widetilde{\gamma}(1)) = F(\widetilde{\gamma}(0))\cdot \rho(f \circ \gamma).
  \end{align}
  Since the left-hand side of (\ref{holonomy_calc}) is equal to $F(\widetilde{\gamma}(0)\cdot \rho(\gamma)) = F(\widetilde{\gamma}(0))\cdot \rho(\gamma)$, we have $\rho(\gamma) = \rho(f \circ \gamma) = f^*\rho (\gamma)$.
  Hence the homeomorphism $f$ preserves the holonomy $\rho$ and we have $\pi(\Aut(P_{\rho}^{\delta})) \subset G$.

  Next we show that the map $\pi\colon\Aut(P_{\rho}^{\delta}) \to G$ is surjective.
  For a homeomorphism $f \colon X \to X$ that preserves $\rho$, we take a lift $\widetilde{f} \colon \widetilde{X} \to \widetilde{X}$.
  Then the map $\widetilde{X}\times (S^1)^{\delta} \to \widetilde{X} \times (S^1)^{\delta}$ defined by $(x,u) \to (\widetilde{f}(x),u)$ induces a bundle automorphism $F\colon P_{\rho}^{\delta} \to P_{\rho}^{\delta}$ that covers $f$.
  Hence the map $\pi\colon\Aut(P_{\rho}^{\delta}) \to G$ is surjective.

  The kernel of $\pi\colon\Aut(P_{\rho}^{\delta}) \to G$ is the gauge group $\Gau(P_{\rho}^{\delta})$ and this is isomorphic to $C(X,(S^1)^{\delta}) \cong (S^1)^{\delta}$ since the fiber of $P_{\rho}^{\delta}$ is the discrete group $(S^1)^{\delta}$ and $X$ is connected.
\end{proof}

Note that the group $S^1$ is in the center of $\Aut(P_{\rho}^{\delta})$, that is, (\ref{cent_S1_extension}) is a central $S^1$-extension of $G$.
Let $e(\Aut(P_{\rho}^{\delta})) \in \HHH_{\grp}^2(G;S^1)$ be the cohomology class corresponding to (\ref{cent_S1_extension}).
Let $P_{\rho} \to X$ be a principal $S^1$-bundle defined by $P_{\rho} = \widetilde{X} \times_{\rho} S^1$.
Then we have the following commutative diagram of groups
\begin{align}
  \xymatrix{
  1 \ar[r] & S^1 \ar[r] \ar[d]^{j} &\Aut(P_{\rho}^{\delta}) \ar[r] \ar[d]^{k} & G \ar[r] \ar@{=}[d] & 1\\
  1 \ar[r] & \Gau(P_{\rho}) \ar[r] & \Aut(P_{\rho}) \ar[r] & G \ar[r] & 1,
  }
\end{align}
where $j \colon S^1 \to \Gau(P_{\rho})$ is the inclusion.
Let $e(\Aut(P_{\rho})) \in \HHH_{\grp}^2(G;\Gau(P_{\rho}))$ be the cohomology class corresponding to the abelian extension $\Aut(P_{\rho})$.
\begin{lemma}\label{lem:j_*_coeff_change}
  Let $j_* \colon \HHH_{\grp}^2(G;S^1) \to \HHH_{\grp}^2(G;\Gau(P_{\rho}))$ be the change of coefficients homomorphism induced by $j$.
  Then we have $j_*(e(\Aut(P_{\rho}^{\delta}))) = e(\Aut(P_{\rho}))$ in $\HHH_{\grp}^2(G;\Gau(P_{\rho}))$.
\end{lemma}
\begin{proof}
  Recall that a cocycle $c$ representing $e(\Aut(P_{\rho}^{\delta}))$ is given by
  \[
    c(g,h) = s(g)s(h)s(gh)^{-1},
  \]
  where $s\colon G \to \Aut(P_{\rho}^{\delta})$ is a section.
  Since $k \circ s \colon G \to \Aut(P_{\rho})$ is a section of $\Aut(P_{\rho}) \to G$, a cocycle $c'$ representing $e(\Aut(P_{\rho}))$ is given by
  \[
    c'(g,h) = k(s(g))k(s(h))k(s(gh))^{-1} = k(s(g)s(h)s(gh)^{-1}) = j(c(g,h)).
  \]
  Hence we have $j_*(e(\Aut(P_{\rho}^{\delta}))) = e(\Aut(P_{\rho}))$.
\end{proof}

\begin{proof}[Proof of Theorem $\ref{main_thm_S^1}$]
  By the commutative diagram of coefficients
  \begin{align*}
    \xymatrix{
    0 \ar[r]& \ZZ \ar[r] \ar[d] & \RR \ar[r] \ar[d] & S^1 \ar[r] \ar[d]^{j} & 1 \\
    0 \ar[r] & \ZZ \ar[r] & C(X,\RR) \ar[r] & C(X,S^1) = \Gau(P_{\rho}) \ar[r] & 1,
    }
  \end{align*}
  we have the commutative diagram
  \begin{align*}
    \xymatrix{
    \HHH_{\grp}^2(G;S^1) \ar[r]^{\mathbf{d}} \ar[d]^{j^*} & \HHH_{\grp}^3(G;\ZZ) \ar@{=}[d] \\
    \HHH_{\grp}^2(G;\Gau(P_{\rho})) \ar[r]^-{\mathbf{d}} & \HHH_{\grp}^3(G;\ZZ),
    }
  \end{align*}
  where the maps $\mathbf{d}$ are the connecting homomorphisms.
  Together with Theorem \ref{main_thm} and Lemma \ref{lem:j_*_coeff_change}, we obtain
  \[
    \mathbf{d} e(A_G(P_{\rho}^{\delta})) = \mathbf{d} j_* e(A_G(P_{\rho}^{\delta})) = \mathbf{d} e(A_G(P_{\rho})) = \iota^* D_c.
  \]
\end{proof}

\subsection{Cocycle description of $e(\Aut(P_{\rho}^{\delta}))$}
In this subsection, we give a geometric description of a group two-cocycle representing the class $e(\Aut(P_{\rho}^{\delta}))$.
Let $c \in \HHH^2(X;\ZZ)$ and $\rho \in \HHH^1(X;S^1)$ be a cohomology class as above.
Let $\alpha \in C^1(X;S^1)$ be a cocycle representing the cohomology class $\rho$, where $(C^*(X;S^1),d)$ denotes the singular cochain complex of $X$ with coefficients in $S^1$.
The singular cochain $C^*(X;S^1)$ and the cohomology $\HHH^*(X;S^1)$ are the right $G$-modules by pullback.
Since $g^*\rho = \rho$ for any homeomorphism $g \in G$, the cochain $g^*\alpha - \alpha$ is a coboundary.
Hence there exists a cochain $\mathfrak{K}_{\alpha}(g) \in C^0(X;S^1)$ satisfying
\[
  g^*\alpha - \alpha = d\mathfrak{K}_{\alpha}(g).
\]
Then a two-cochain $\mathfrak{G}_{x,\alpha}$ in $C^2(G;S^1)$ is defined by
\begin{align}\label{cocycle_S1_extension}
  \mathfrak{G}_{x,\alpha}(g,h) = \int_{x}^{hx} g^* \alpha - \alpha = \mathfrak{K}_{\alpha}(g)(hx) - \mathfrak{K}_{\alpha}(g)(x),
\end{align}
where $x \in X$.
Here the symbol $\int_x^{hx}$ denotes the pairing of the cocycle $g^*\alpha - \alpha$ and a path from $x$ to $hx$.
By the arguments same as in \cite[Theorem 3,1]{ismagilov_losik_michor06}, we have the following proposition.

\begin{proposition}
  The two-cochain $\mathfrak{G}_{x,\alpha} \in C_{\grp}^2(G;S^1)$ is a cocycle and the cohomology class $[\mathfrak{G}_{x,\alpha}]$ is independent of the choice of the point $x \in X$ and the singular cochain $\alpha$.
\end{proposition}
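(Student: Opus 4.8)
The plan is to base everything on the $0$-cochain primitives $\mathfrak{K}_\alpha(g)$ together with a single twisted cocycle identity that they satisfy. First I would record that $\mathfrak{K}_\alpha(g) \in C^0(X;S^1)$ is determined by $g^*\alpha - \alpha = d\mathfrak{K}_\alpha(g)$ only up to an additive constant, since $X$ is connected and hence a $0$-cochain killed by $d$ is constant; crucially, the defining formula $\mathfrak{G}_{x,\alpha}(g,h) = \mathfrak{K}_\alpha(g)(hx) - \mathfrak{K}_\alpha(g)(x)$ is a difference of two evaluations, so it is insensitive to this ambiguity. Because the $G$-action is by pullback we have $(gh)^* = h^* \circ g^*$, and applying $d$ to the primitive for $gh$ gives $d\mathfrak{K}_\alpha(gh) = (gh)^*\alpha - \alpha = h^*(g^*\alpha - \alpha) + (h^*\alpha - \alpha) = d\bigl(h^*\mathfrak{K}_\alpha(g) + \mathfrak{K}_\alpha(h)\bigr)$, using that pullback commutes with $d$. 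Connectedness of $X$ then yields the fundamental identity
\[
  \mathfrak{K}_\alpha(gh) = h^*\mathfrak{K}_\alpha(g) + \mathfrak{K}_\alpha(h) + C(g,h)
\]
for a constant $C(g,h) \in S^1$, where $(h^*\mathfrak{K}_\alpha(g))(y) = \mathfrak{K}_\alpha(g)(hy)$.

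For the cocycle condition I would substitute this identity directly into the group coboundary $\delta\mathfrak{G}_{x,\alpha}(g_1,g_2,g_3) = \mathfrak{G}_{x,\alpha}(g_2,g_3) - \mathfrak{G}_{x,\alpha}(g_1 g_2, g_3) + \mathfrak{G}_{x,\alpha}(g_1,g_2 g_3) - \mathfrak{G}_{x,\alpha}(g_1,g_2)$, where the $S^1$-coefficients are trivial because the extension is central. Expanding $\mathfrak{G}_{x,\alpha}(g_1 g_2, g_3)$ via the fundamental identity makes the constant $C(g_1,g_2)$ cancel between its two evaluations, and the five remaining evaluations $\mathfrak{K}_\alpha(g_2)(g_3 x)$, $\mathfrak{K}_\alpha(g_2)(x)$, $\mathfrak{K}_\alpha(g_1)(g_2 g_3 x)$, $\mathfrak{K}_\alpha(g_1)(g_2 x)$, $\mathfrak{K}_\alpha(g_1)(x)$ each occur with opposite signs, so $\delta\mathfrak{G}_{x,\alpha} = 0$.

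For independence of the base point, given $x, x' \in X$ I would introduce the $1$-cochain $\nu(g) = \mathfrak{K}_\alpha(g)(x') - \mathfrak{K}_\alpha(g)(x) = \int_x^{x'} (g^*\alpha - \alpha)$ and compute $\delta\nu(g,h) = \nu(h) - \nu(gh) + \nu(g)$. Feeding the fundamental identity into $\nu(gh)$ again cancels the constant and the $\mathfrak{K}_\alpha(h)$-terms, leaving exactly $\mathfrak{G}_{x,\alpha}(g,h) - \mathfrak{G}_{x',\alpha}(g,h)$, so the two cocycles differ by the coboundary $\delta\nu$. For independence of $\alpha$, I would write $\alpha' = \alpha + d\eta$ with $\eta \in C^0(X;S^1)$, which is valid since $\alpha$ and $\alpha'$ represent the same class $\rho$; then one may take $\mathfrak{K}_{\alpha'}(g) = \mathfrak{K}_\alpha(g) + g^*\eta - \eta$, and a direct check shows that $\mathfrak{G}_{x,\alpha'} - \mathfrak{G}_{x,\alpha}$ is the coboundary of the $1$-cochain $\xi(g) = \eta(gx) - \eta(x)$ (up to sign).

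All of these computations are elementary; the only real care needed—and the step most prone to error—is bookkeeping the conventions: that the pullback action is a right action with $(gh)^* = h^* g^*$, that $S^1$ is a trivial $G$-module because the extension is central, and that each $\mathfrak{K}_\alpha$ is only well-defined modulo constants, so every manipulation must survive the addition of a constant. Once the fundamental identity is established, the cocycle property and both independence statements reduce to the same cancellation pattern.
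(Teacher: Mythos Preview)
Your argument is correct. The paper does not actually prove this proposition; it simply cites \cite[Theorem 3.1]{ismagilov_losik_michor06} and asserts that the same arguments apply. Your direct computation via the primitive identity
\[
  \mathfrak{K}_\alpha(gh) = h^*\mathfrak{K}_\alpha(g) + \mathfrak{K}_\alpha(h) + C(g,h)
\]
is exactly the standard verification one would carry out in that setting, and all three claims (cocycle, independence of $x$, independence of $\alpha$) follow from it as you describe. One minor clarification: when you invoke ``a $0$-cochain killed by $d$ is constant'', this uses path-connectedness rather than mere connectedness, but in this section the paper assumes $X$ admits a universal cover, so $X$ is path-connected and the step is justified.
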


Then the following theorem holds.
\begin{theorem}\label{thm:cocycle_description}
  The cohomology class $[\mathfrak{G}_{x,\alpha}]$ is equal to $e(\Aut(P_{\rho})) \in \HHH_{\grp}^2(G;S^1)$.
\end{theorem}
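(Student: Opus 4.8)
The plan is to compute the extension class $e(A_G(P_{\rho}^{\delta}))$ directly from the section description (\ref{euler_cocycle}) and then match the resulting group two-cocycle with $\mathfrak{G}_{x,\alpha}$ up to a group coboundary. I read the right-hand side of the statement as the class $e(A_G(P_{\rho}^{\delta})) \in H_{\grp}^2(G;S^1)$, which has the same coefficient group $S^1$ as $\mathfrak{G}_{x,\alpha}$; the assertion for $e(A_G(P_{\rho}))$ then follows at once by applying the coefficient homomorphism $j_*$ of Proposition \ref{prop:j_*_coeff_change}, using $j_* e(A_G(P_{\rho}^{\delta})) = e(A_G(P_{\rho}))$. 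First I would give a coordinate description of the elements of $A_G(P_{\rho}^{\delta})$ in terms of $\alpha$. Pulling $\alpha$ back to the universal cover gives a cocycle $\widetilde{\alpha} = \pi^*\alpha$; since $\widetilde{X}$ is simply connected, $H^1(\widetilde{X};S^1) = 0$, so $\widetilde{\alpha} = d\beta$ for a $0$-cochain $\beta : \widetilde{X} \to S^1$, which satisfies $\beta(\widetilde{x}\gamma) - \beta(\widetilde{x}) = \rho(\gamma)$ for $\gamma \in \pi_1(X)$. Writing $P_{\rho}^{\delta} = \widetilde{X} \times_{\rho} (S^1)^{\delta}$, the assignment $[\widetilde{x},u] \mapsto (\pi(\widetilde{x}),\, u+\beta(\widetilde{x}))$ is a well-defined trivialization $\Theta : P_{\rho}^{\delta} \to X \times S^1$ at the level of cochains, in which the automorphism $[\widetilde{x},u] \mapsto [\widetilde{g}\widetilde{x},u]$ attached to a lift $\widetilde{g}$ of $g\in G$ becomes $(y,w) \mapsto (gy,\, w + \beta(\widetilde{g}\widetilde{x}) - \beta(\widetilde{x}))$.

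Next I would identify the fiber-translation $\beta\circ\widetilde{g} - \beta$ with $\mathfrak{K}_{\alpha}(g)$. Pulling $g^*\alpha - \alpha = d\mathfrak{K}_{\alpha}(g)$ back to $\widetilde{X}$ yields $\widetilde{g}^*\widetilde{\alpha} - \widetilde{\alpha} = d(\mathfrak{K}_{\alpha}(g)\circ\pi)$, while $\widetilde{\alpha} = d\beta$ gives $\widetilde{g}^*\widetilde{\alpha} - \widetilde{\alpha} = d(\beta\circ\widetilde{g} - \beta)$. Since $\widetilde{X}$ is connected, these primitives differ by a constant $C(g)\in S^1$, so $\beta(\widetilde{g}\widetilde{x}) - \beta(\widetilde{x}) = \mathfrak{K}_{\alpha}(g)(\pi\widetilde{x}) + C(g)$. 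Normalizing the additive constant of $\mathfrak{K}_{\alpha}(g)$ (which does not affect $\mathfrak{G}_{x,\alpha}$) so that $C(g)=0$ defines a set-section $s:G\to A_G(P_{\rho}^{\delta})$ for which $s(g)$ acts by $(y,w)\mapsto (gy,\, w+\mathfrak{K}_{\alpha}(g)(y))$.

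Then I would compute the extension cocycle. Composing, $s(g)s(h)$ sends $(y,w)\mapsto (ghy,\, w+\mathfrak{K}_{\alpha}(h)(y)+\mathfrak{K}_{\alpha}(g)(hy))$, whereas $s(gh)$ sends $(y,w)\mapsto(ghy,\, w+\mathfrak{K}_{\alpha}(gh)(y))$, so $s(g)s(h)s(gh)^{-1}$ is the constant gauge transformation
\[
  \Xi(g,h) = \mathfrak{K}_{\alpha}(h) + h^*\mathfrak{K}_{\alpha}(g) - \mathfrak{K}_{\alpha}(gh).
\]
Using $(gh)^* = h^*g^*$ and $h^*\alpha - \alpha = d\mathfrak{K}_{\alpha}(h)$ one checks $d[\mathfrak{K}_{\alpha}(gh) - \mathfrak{K}_{\alpha}(h) - h^*\mathfrak{K}_{\alpha}(g)] = 0$, so this difference is a constant $\kappa(g,h)\in S^1$ and $\Xi = -\kappa$; by (\ref{euler_cocycle}) we get $e(A_G(P_{\rho}^{\delta})) = [-\kappa]$.

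Finally I would reconcile $-\kappa$ with $\mathfrak{G}_{x,\alpha}$. Evaluating the constant $\kappa(g,h)$ at the basepoint $x$ and putting $\eta(g) = \mathfrak{K}_{\alpha}(g)(x) \in C_{\grp}^1(G;S^1)$, a direct substitution gives
\[
  -\kappa(g,h) - \mathfrak{G}_{x,\alpha}(g,h) = \eta(h) - \eta(gh) + \eta(g) = \delta\eta(g,h),
\]
since the $G$-action on the central $S^1$ is trivial. Hence $[-\kappa] = [\mathfrak{G}_{x,\alpha}]$ in $H_{\grp}^2(G;S^1)$, which is the theorem. The hard part will be the middle step: showing that a lifted automorphism translates the fiber by exactly $\mathfrak{K}_{\alpha}(g)$ up to a constant. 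This forces one to argue at the level of singular cochains — the primitive $\beta$ is in general discontinuous, so $\Theta$ is only a formal trivialization — and to keep the conventions $(gh)^* = h^*g^*$ and $g^*f = f\circ g$ consistent throughout; the remaining bookkeeping that absorbs the basepoint discrepancy into $\delta\eta$ is then routine.
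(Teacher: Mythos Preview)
Your argument is correct and reaches the conclusion by a genuinely different route from the paper. The paper proceeds via the Hochschild--Serre spectral sequence: it produces a primitive $\theta\in C^0(P_{\rho}^{\delta};S^1)$ of $p^*\alpha$ on the total space, sets $\tau(\phi)=\theta(\phi y)-\theta(y)$ for $\phi\in A_G(P_{\rho}^{\delta})$, verifies that $\tau|_{S^1}=\id_{S^1}$ and that $-\delta\tau=\pi^*\mathfrak{G}_{x,\alpha}$, and then invokes Lemma~\ref{lemma:identity-Euler_class} to identify $-d_2^{0,1}(\id_{S^1})$ simultaneously with $[\mathfrak{G}_{x,\alpha}]$ and with $e(A_G(P_{\rho}^{\delta}))$. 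You instead lift to the universal cover, build an explicit set-theoretic section $s:G\to A_G(P_{\rho}^{\delta})$ out of a primitive $\beta$ of $\pi^*\alpha$ on $\widetilde X$, and compute $s(g)s(h)s(gh)^{-1}$ directly from formula~(\ref{euler_cocycle}), matching the result with $\mathfrak{G}_{x,\alpha}$ up to $\delta\eta$. The two approaches are essentially dual---your $\beta$ on $\widetilde X$ plays the role of the paper's $\theta$ on $P_{\rho}^{\delta}$, and a section $s$ is the same data as a $1$-cochain $\tau$ restricting to $\id_{S^1}$---but yours is more elementary in that it avoids the spectral sequence and Lemma~\ref{lemma:identity-Euler_class} entirely, at the price of the extra normalisation step with $C(g)$ and the care you correctly flag about the merely formal (set-theoretic, non-continuous) trivialisation $\Theta$. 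The paper's route, in turn, makes the relation to the transgression picture used elsewhere in the paper more transparent.
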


For the proof of Theorem \ref{thm:cocycle_description}, we need the following lemmas.
Let us recall that $p \colon P_{\rho}^{\delta} \to X$ is the principal $(S^1)^{\delta}$-bundle with the holonomy $\rho$.

\begin{lemma}\label{lemma:pullback_of_rho}
  The pullback $p^* \rho \in \HHH^1(P_{\rho}^{\delta};S^1)$ is equal to zero.
\end{lemma}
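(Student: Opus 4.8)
The plan is to reduce the vanishing of $p^*\rho$ to a purely $\pi_1$-level statement. Since $S^1 = \mathbb{R}/\mathbb{Z}$ is a divisible, hence injective, abelian group, the universal coefficient theorem gives a natural isomorphism $H^1(Y;S^1) \cong \Hom(H_1(Y),S^1) \cong \Hom(\pi_1(Y),S^1)$ for any space $Y$, and this isomorphism is compatible with pullback. Under it, the class $\rho \in H^1(X;S^1)$ is exactly the homomorphism $\rho : \pi_1(X) \to S^1$ already fixed in the construction, and the pullback $p^*\rho \in H^1(P_{\rho}^{\delta};S^1)$ corresponds to the composite
\[
  \pi_1(P_{\rho}^{\delta}) \overset{p_*}{\longrightarrow} \pi_1(X)
  \overset{\rho}{\longrightarrow} S^1 .
\]
So it suffices to show that this composite is trivial, i.e.\ that the image of $p_*$ is contained in $\Ker(\rho)$.

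The key observation is that, because the fiber $(S^1)^{\delta}$ is discrete, the map $p : P_{\rho}^{\delta} \to X$ is a covering map, with (possibly uncountable, discrete) fiber $S^1$. Its monodromy is the action of $\pi_1(X)$ on the fiber coming from the construction $P_{\rho}^{\delta} = \widetilde{X}\times_{\rho}(S^1)^{\delta}$, namely translation $u \mapsto u\cdot \rho(\gamma)$. First I would record that the stabilizer of any point $u$ of the fiber under this action is precisely $\Ker(\rho)$, independently of $u$, since translation by $\rho(\gamma)$ is the identity exactly when $\rho(\gamma)=0$. By the standard correspondence between subgroups and connected coverings, the component of $P_{\rho}^{\delta}$ through a chosen basepoint is the covering of $X$ associated with the subgroup $\Ker(\rho)\subset\pi_1(X)$, and $p_*$ carries $\pi_1$ of that component isomorphically onto $\Ker(\rho)$. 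Hence $\rho\circ p_* = 0$ on that component.

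To finish I would address the (mild) issue that $P_{\rho}^{\delta}$ is typically disconnected. Because $S^1$ is abelian, $\Ker(\rho)$ is normal and the monodromy orbits are the cosets of $\mathrm{im}(\rho)$, so every connected component of $P_{\rho}^{\delta}$ is a copy of the same covering and $p_*(\pi_1)$ equals $\Ker(\rho)$ on each. Since a class in $H^1(P_{\rho}^{\delta};S^1)$ vanishes precisely when it vanishes on each component, the composite $\rho\circ p_*$ is zero on all of $P_{\rho}^{\delta}$, giving $p^*\rho = 0$ as required.

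The genuinely delicate points are the two identifications feeding into the argument: first, that the universal-coefficient isomorphism $H^1(-;S^1)\cong\Hom(\pi_1(-),S^1)$ is natural for $p$ under our running hypotheses on $X$ (so that $P_{\rho}^{\delta}$ is a reasonable space, e.g.\ admits the covering-space machinery), and second, the precise computation of the monodromy action and its stabilizer from the associated-bundle description $\widetilde{X}\times_{\rho}(S^1)^{\delta}$. Once these are pinned down, the conclusion that $\mathrm{im}(p_*)\subseteq\Ker(\rho)$ is immediate and the lemma follows.
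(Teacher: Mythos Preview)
Your argument is correct and is essentially the same as the paper's: both reduce to showing that $\rho\circ p_*$ vanishes on $\pi_1$, using that $p$ is a covering whose holonomy is $\rho$. The paper carries this out in a single line---for a loop $\gamma$ in $P_{\rho}^{\delta}$, it observes $p^*\rho(\gamma)=\rho(p\gamma)=\gamma(1)-\gamma(0)=0$ since $\gamma$ is itself a closed lift of $p\gamma$---whereas you reach the same conclusion via the covering-space correspondence identifying $p_*(\pi_1)$ with $\Ker(\rho)$.
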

\begin{proof}
  Let $\gamma \colon [0,1] \to P_{\rho}^{\delta}$ be a based loop.
  Then $\gamma$ is a lift of the loop $p\circ\gamma$ in $X$.
  Since $\rho \colon \pi(X) \to S^1$ is the holonomy homomorphism, we have
  \[
    p^* \rho(\gamma) = \rho(p\circ \gamma) = \gamma(1) - \gamma(0) = 0,
  \]
  and the lemma follows.
\end{proof}

By lemma \ref{lemma:pullback_of_rho}, we take a singular cochain $\theta' \in C^0(P_{\rho}^{\delta};S^1)$ satisfying $d\theta' = p^* \alpha$.
We take a base point $y \in P_{\rho}^{\delta}$ and set $p(y)=x$.
For any $u \in S^1$, let $P_{\rho}^{\delta}(u)$ denote the connected component of $P_{\rho}^{\delta}$ which contains the point $y\cdot u$.
Then we set
\[
  z(u) =\theta'(y) - \theta'(y \cdot u) + u \in S^1.
\]
By a straightforward calculation, we have the equality $z(u) = z(u + \rho(\gamma))$ for any $\gamma \in \pi_1(X)$.
Hence, the value $z(u)$ gives a continuous function $z \colon P_{\rho}^{\delta} \to (S^1)^{\delta}$.
Note that $z(y\cdot u) = z(u)$ and $z(y) = 0$.
We set $\theta = \theta' + z \in C^0(X;\ZZ)$.
Since $dz = 0$, we have $d\theta = p^*\alpha$.
Let $\tau \in C_{\grp}^1(A_G(P_{\rho}^{\delta});S^1)$ be a cochain defined by
\[
  \tau(\phi) = \int_{y}^{\phi y} \theta = \theta(\phi y) - \theta(y).
\]

\begin{lemma}\label{lemma:connection_cochain}
  The restriction $\tau|_{S^1} \colon S^1 \to S^1$ is equal to the identity.
\end{lemma}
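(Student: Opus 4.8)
The plan is to identify explicitly the subgroup $S^1 \subset A_G(P_\rho^\delta)$ and then evaluate $\tau$ on it directly. The central $S^1$ is the gauge group of the principal ${S^1}^\delta$-bundle, consisting of the constant gauge transformations: for $u \in S^1$ the automorphism $\phi_u : P_\rho^\delta \to P_\rho^\delta$ is given by right multiplication $\phi_u(q) = q \cdot u$. Since $S^1$ is abelian, this right multiplication commutes with the principal right action and covers $\id_X$, so $\phi_u$ indeed lies in the central $S^1$ of the extension, and every element of that $S^1$ arises this way.

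With this identification, I would unwind the definition of $\tau$ on $\phi_u$: by definition $\tau(\phi_u) = \theta(\phi_u y) - \theta(y) = \theta(y \cdot u) - \theta(y)$, where $y$ is the fixed base point with $p(y) = x$. The next step is to substitute $\theta = \theta' + z$ and use the two defining properties of $z$ recorded above, namely $z(y) = 0$ and the invariance $z(y \cdot u) = z(u) = \theta'(y) - \theta'(y \cdot u) + u$. Carrying out the substitution gives
\[
  \tau(\phi_u) = \bigl(\theta'(y\cdot u) + z(y \cdot u)\bigr) - \bigl(\theta'(y) + z(y)\bigr)
  = \theta'(y\cdot u) + \theta'(y) - \theta'(y \cdot u) + u - \theta'(y),
\]
and the four $\theta'$-terms cancel in pairs, leaving $\tau(\phi_u) = u$. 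This is exactly the statement that $\tau|_{S^1} = \id$.

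The computation is short; the only real care needed lies in the first step, correctly identifying the central $S^1 \subset A_G(P_\rho^\delta)$ with right multiplication by $u$ on $P_\rho^\delta$ and confirming that the definition of $z$ is precisely engineered so that the $\theta'$ contributions cancel. If one instead worked with the raw cochain $\theta'$ (which is only a primitive of $p^*\alpha$ on each connected component and need not be equivariant under the fiber action), the restriction $\tau|_{S^1}$ would generally fail to be the identity; the correction term $z$ is what repairs this, since by construction $z(y \cdot u) - z(y) = u - \bigl(\theta'(y\cdot u) - \theta'(y)\bigr)$. I therefore expect the substance of the lemma to be this built-in cancellation rather than any genuine obstacle.
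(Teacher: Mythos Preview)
Your proposal is correct and follows essentially the same route as the paper: both compute $\tau(u)=\theta(y\cdot u)-\theta(y)$, expand $\theta=\theta'+z$, and use $z(y)=0$ together with $z(y\cdot u)=\theta'(y)-\theta'(y\cdot u)+u$ to obtain the cancellation yielding $u$. Your additional remarks about identifying the central $S^1$ with right multiplication and about the role of the correction term $z$ are accurate elaborations, but the underlying argument is identical to the paper's.
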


\begin{proof}
  By the definition of $\tau$, we have
  \begin{align*}
    \tau(u) = \theta(y\cdot u) - \theta(y) =\theta'(y\cdot u) + z(y\cdot u) -\theta'(y) - z(y)= u
  \end{align*}
  for any $u \in S^1$.
\end{proof}

\begin{lemma}\label{lemma:curvature}
  The equality
  \[
    -\delta \tau = \pi^* \mathfrak{G}_{x,\alpha} \in C_{\grp}^2(\Aut(P_{\rho}^{\delta});S^1)
  \]
  holds, where $\pi \colon \Aut(P_{\rho}^{\delta}) \to G$ is the projection.
\end{lemma}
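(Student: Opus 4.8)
The plan is to reduce the identity to two transformation properties of the $0$-cochain $\theta$ and then expand the group coboundary directly. Since $S^1$ is central in $A_G(P_{\rho}^{\delta})$, the coefficients carry the trivial action, so for $\phi, \psi \in A_G(P_{\rho}^{\delta})$ one has $\delta\tau(\phi,\psi) = \tau(\psi) - \tau(\phi\psi) + \tau(\phi)$. Substituting $\tau(\phi) = \theta(\phi y) - \theta(y)$ and rearranging, I would write
\[
  -\delta\tau(\phi,\psi) = \bigl(\theta(\phi\psi y) - \theta(\psi y)\bigr) - \bigl(\theta(\phi y) - \theta(y)\bigr),
\]
so that everything comes down to understanding the quantity $\theta(\phi q) - \theta(q)$ for a bundle automorphism $\phi$ and a point $q$.

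The first key step is to upgrade Lemma \ref{lemma:connection_cochain} to a global $S^1$-equivariance of $\theta$, namely $\theta(q\cdot u) = \theta(q) + u$ for every $q \in P_{\rho}^{\delta}$ and $u \in S^1$; this is exactly what the correction term $z$ was introduced to enforce. Over the base point this is Lemma \ref{lemma:connection_cochain}, which gives $\theta(y\cdot w) = \theta(y) + w$. I would propagate it to an arbitrary component by joining $q$ to the base-point translate $y\cdot v$ in its own component (here $v$ labels that component) by a path $\sigma$ and using $d\theta = p^*\alpha$ to express $\theta(q) - \theta(y\cdot v)$ as the pairing of $\alpha$ with the projected path $p\sigma$; since $\sigma\cdot u$ has the same projection, $\theta(q\cdot u) - \theta(y\cdot(v+u))$ equals the same pairing, and subtracting while applying the base-point equivariance yields $\theta(q\cdot u) - \theta(q) = u$.

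The second key step is to show that the twisting of $\theta$ by an automorphism $\phi$ covering $g = \pi(\phi)$ is governed by $\mathfrak{K}_{\alpha}(g)$ up to a single global constant. Computing $d(\theta\circ\phi - \theta)$ and using $d\theta = p^*\alpha$ together with $p\circ\phi = g\circ p$ gives $d\bigl(\theta\circ\phi - \theta - p^*\mathfrak{K}_{\alpha}(g)\bigr) = 0$, so the $0$-cochain $C_{\phi}(q) = \theta(\phi q) - \theta(q) - \mathfrak{K}_{\alpha}(g)(p q)$ is constant on each connected component of $P_{\rho}^{\delta}$. Feeding in the equivariance of the first step (with $\phi(q\cdot u) = \phi(q)\cdot u$ and $p(q\cdot u) = p(q)$) shows $C_{\phi}(q\cdot u) = C_{\phi}(q)$, i.e. $C_{\phi}$ is $S^1$-invariant; as $S^1$ acts transitively on the set of components, which are indexed by $S^1/\rho(\pi_1(X))$, the cochain $C_{\phi}$ is in fact a global constant.

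Finally I would combine the two pieces. With $h = \pi(\psi)$ one has $p(\psi y) = hx$ and $p(y) = x$, so the second step gives $\theta(\phi\psi y) - \theta(\psi y) = C_{\phi} + \mathfrak{K}_{\alpha}(g)(hx)$ and $\theta(\phi y) - \theta(y) = C_{\phi} + \mathfrak{K}_{\alpha}(g)(x)$. Plugging these into the displayed expression for $-\delta\tau$, the common constant $C_{\phi}$ cancels and what remains is $\mathfrak{K}_{\alpha}(g)(hx) - \mathfrak{K}_{\alpha}(g)(x) = \mathfrak{G}_{x,\alpha}(g,h) = (\pi^*\mathfrak{G}_{x,\alpha})(\phi,\psi)$, which is the claim. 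The main obstacle is the first step: the whole argument hinges on the global $S^1$-equivariance of $\theta$, and hence on the fact that the correction $z$ forces $\theta(q\cdot u) = \theta(q) + u$ at every point and not merely over the base point $x$. Once that is secured, the $S^1$-invariance of $C_{\phi}$, its global constancy, and the final cancellation are routine.
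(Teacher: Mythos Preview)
Your argument is correct and follows essentially the same route as the paper: both proofs compute $d(\phi^*\theta-\theta)=d\bigl(p^*\mathfrak{K}_{\alpha}(g)\bigr)$, conclude that the difference $C_\phi=\phi^*\theta-\theta-p^*\mathfrak{K}_{\alpha}(g)$ is a $0$-cocycle, and then subtract its values at $\psi y$ and $y$ so that the $C_\phi$-contribution cancels and only $\mathfrak{K}_{\alpha}(g)(hx)-\mathfrak{K}_{\alpha}(g)(x)=\mathfrak{G}_{x,\alpha}(g,h)$ survives. The paper compresses this into the single line $p^*\mathfrak{K}_{\alpha}(g)(\psi y)-p^*\mathfrak{K}_{\alpha}(g)(y)=(\phi^*\theta-\theta)(\psi y)-(\phi^*\theta-\theta)(y)$; your version is more explicit in that you justify why $C_\phi(\psi y)=C_\phi(y)$ even when $\psi y$ and $y$ lie in different components of $P_\rho^\delta$, by first extending Lemma~\ref{lemma:connection_cochain} to the global equivariance $\theta(q\cdot u)=\theta(q)+u$ and then using the transitivity of the $S^1$-action on $\pi_0(P_\rho^\delta)$ to upgrade ``locally constant'' to ``globally constant''---a point the paper passes over silently.
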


\begin{proof}
  For the pullback $p^*\mathfrak{K}_{\alpha}(g) \in C^0(P_{\rho}^{\delta};S^1)$, we have
  \begin{align*}
    d(p^*\mathfrak{K}_{\alpha}(g)) &= p^*(d\mathfrak{K}_{\alpha}(g)) =p^*g^*\alpha - p^* \alpha\\
    &= \phi^* p^* \alpha - p^* \alpha =\phi^* d\theta -d\theta = d(\phi^* \theta - \theta),
  \end{align*}
  where $\phi \in \Aut(P_{\rho}^{\delta})$ is a lift of $g \in G$.
  Hence, for $\phi,\psi \in \Aut(P_{\rho}^{\delta})$ which cover $g, h \in G$, respectively, we have
  \begin{align*}
    \pi^*\mathfrak{G}_{x,\alpha}(\phi, \psi) &= \mathfrak{G}_{p(y),\alpha}(g,h) = \mathfrak{K}_{\alpha}(g)(hp(y)) - \mathfrak{K}_{\alpha}(g)(p(y))\\
    &=\mathfrak{K}_{\alpha}(g)(p\psi (y)) - \mathfrak{K}_{\alpha}(g)(p(y))\\
    &=p^*\mathfrak{K}_{\alpha}(g)(\psi(y)) - p^*\mathfrak{K}_{\alpha}(g)(y)\\
    &=(\phi^*\theta - \theta)(\psi(y)) - (\phi^*\theta - \theta)(y)\\
    &=(\theta(\phi \psi(y)) - \theta(y)) -(\theta(\phi(y)) - \theta(y)) -(\theta(\psi(y)) - \theta(y))\\
    &= -\delta \tau(\phi,\psi),
  \end{align*}
  and the lemma follows.
\end{proof}

\begin{proof}[Proof of theorem $\ref{thm:cocycle_description}$]
  Let $E_r^{p,q}$ denote the Hochschild-Serre spectral sequence of $0 \to S^1 \to \Aut(P_{\rho}^{\delta}) \to G \to 1$ with coefficients in $S^1$.
  Then the derivation $d_2^{0,1}$ defines a map
  \[
    d_2^{0,1} \colon \HHH_{\grp}^1(S^1;S^1) = E_2^{0,1} \to E_2^{2,0} = \HHH_{\grp}^2(G;S^1).
  \]
  By Corollary \ref{prop:transgression}, Lemma \ref{lemma:connection_cochain}, and Lemma \ref{lemma:curvature}, we have $d_2^{0,1} (\id_{S^1}) = -[\mathfrak{G}_{x,\alpha}]$.
  On the other hand, by Lemma \ref{lemma:identity-Euler_class}, we have $d_2^{0,1}(\id_{S^1}) = -e(\Aut(P_{\rho}^{\delta}))$.
  Hence we have $[\mathfrak{G}_{x,\alpha}] = e(\Aut(P_{\rho}^{\delta}))$.
\end{proof}

\section{Examples}\label{examples}
In this section, we give two examples that the universal characteristic classes $D_c$ and $\iota^*D_c$ are non-trivial.
The first example is the complex projective space $\mathbb{C}P^n$.
The proof of the following theorem is similar to the proof of \cite[Theorem 7.1]{2009.01022} (and also similar to the proof of Theorem \ref{thm:non-trivial_real} below), so we will omit it.
\begin{theorem}\label{thm:non-trivial_complex}
  Let $n$ be a positive integer and $c \in \HHH^2(\mathbb{C}P^n;\ZZ)$ the generator.
  Let $G$ denote the group $\Homeo(\mathbb{C}P^n, c)$ of $c$-preserving homeomorphisms.
  Then the universal characteristic classes $D_c \in \HHH^3(BG;\ZZ)$ and $\iota^* D_c \in \HHH^3(BG^{\delta};\ZZ)$ are non-zero.
\end{theorem}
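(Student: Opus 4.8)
The plan is to exhibit a single finite subgroup $\Gamma \subset G$ on which the restriction of $\iota^*e_c$ is non-zero; since $\iota^*e_c$ is the image of $e_c$ under $\iota^*$, this forces $\iota^*e_c \neq 0$ and $e_c \neq 0$ simultaneously. First I would record the hypotheses: $\mathbb{C}P^n$ is simply connected, so $H^1(\mathbb{C}P^n;\mathbb{Z})=0$ and the construction of Section \ref{construction} applies, and $c$ generates $H^2(\mathbb{C}P^n;\mathbb{Z})\cong\mathbb{Z}$. I would take $P\to\mathbb{C}P^n$ to be the Hopf bundle $S^{2n+1}\to\mathbb{C}P^n$, whose first Chern class is $c$. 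The linear action of $U(n+1)$ on $S^{2n+1}\subset\mathbb{C}^{n+1}$ consists of bundle automorphisms of $P$ and covers the $PU(n+1)$-action on $\mathbb{C}P^n$; since $PU(n+1)$ is connected it lies in $G=\Homeo(\mathbb{C}P^n)_0$, and we obtain the familiar central extension $1\to S^1\to U(n+1)\to PU(n+1)\to 1$ sitting inside the gauge extension (\ref{gauge_extension}).

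Next I would choose the finite Heisenberg subgroup. Let $\omega=e^{2\pi i/(n+1)}$ and let $C=\operatorname{diag}(1,\omega,\dots,\omega^{n})$ and $S$ be the cyclic shift $e_i\mapsto e_{i+1}$ in $U(n+1)$. Their images generate a subgroup $\Gamma\cong(\mathbb{Z}/(n+1))^2\subset PU(n+1)\subset G$, while the preimage $\widetilde{\Gamma}\subset U(n+1)$ is the finite Heisenberg group, a non-split central $S^1$-extension of $\Gamma$. Its class $e(\widetilde{\Gamma})\in H_{\grp}^2(\Gamma;S^1)$ is non-zero: a splitting would give commuting lifts of $C,S$ since $\Gamma$ is abelian, but any lifts $aC,bS$ satisfy $(aC)(bS)=\omega^{\pm1}(bS)(aC)$ and hence never commute for $n\geq 1$. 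I would then restrict Theorem \ref{main_thm} to $\Gamma$: by naturality of the transgression and of the connecting homomorphism, $(\iota^*e_c)|_\Gamma=\delta\, e(A_\Gamma(P))$ in $H_{\grp}^3(\Gamma;\mathbb{Z})$, where $A_\Gamma(P)$ is the restriction of the gauge extension and $\delta$ is the connecting map of $0\to\mathbb{Z}\to C(\mathbb{C}P^n,\mathbb{R})\to\Gau(P)\to0$.

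The core identification is that, choosing the section $\Gamma\to A_\Gamma(P)$ to take values in the $U(n+1)$-matrices lifting $\Gamma$, the resulting two-cocycle $s(g)s(h)s(gh)^{-1}$ takes values in the constant gauge transformations $j(S^1)\subset\Gau(P)$ and coincides with the Heisenberg cocycle; hence $e(A_\Gamma(P))=j_* e(\widetilde{\Gamma})$. Feeding this through the commuting square of coefficient sequences used in the Proof of Theorem \ref{main_thm_S^1} gives $\delta\circ j_*=\beta$, the Bockstein of $0\to\mathbb{Z}\to\mathbb{R}\to S^1\to0$, so that
\[
  (\iota^*e_c)|_\Gamma=\beta\bigl(e(\widetilde{\Gamma})\bigr)\in H_{\grp}^3(\Gamma;\mathbb{Z}).
\]
Finally, because $\Gamma$ is finite, $H_{\grp}^{q}(\Gamma;\mathbb{R})=0$ for $q>0$ and $\beta$ is an isomorphism in positive degrees; combined with $e(\widetilde{\Gamma})\neq0$ this yields $(\iota^*e_c)|_\Gamma\neq0$, whence $\iota^*e_c\neq0$ and a fortiori $e_c\neq0$.

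The step I expect to be the main obstacle is the core identification $e(A_\Gamma(P))=j_* e(\widetilde{\Gamma})$: one must check carefully that the bundle automorphisms arising from $U(n+1)$ furnish a section of $A_\Gamma(P)\to\Gamma$ whose associated cocycle lands in the constant, $S^1$-valued, gauge transformations, so that the restricted gauge extension is genuinely \emph{induced} from the finite-dimensional central extension rather than merely abstractly related to it. The remaining homological bookkeeping (naturality of the transgression, the coefficient diagram, and the Bockstein isomorphism for finite groups) is then routine.
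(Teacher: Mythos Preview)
Your argument is correct. The step you flag as the obstacle is in fact routine: the linear action furnishes a group homomorphism $U(n+1)\to A_G(P)$ lifting $PU(n+1)\to G$, so any set-theoretic lift $s:\Gamma\to U(n+1)$ already is a section of $A_\Gamma(P)\to\Gamma$; the cocycle $s(g)s(h)s(gh)^{-1}$ lies in the center $S^1\cdot I\subset U(n+1)$, which acts on $P=S^{2n+1}$ precisely by the constant gauge transformations $j(S^1)$. Hence $e(A_\Gamma(P))=j_*e(\widetilde{\Gamma})$ on the nose at the cocycle level. (One small wording issue: the full preimage $\pi^{-1}(\Gamma)\subset U(n+1)$ is not finite---it contains $S^1$---so calling it ``the finite Heisenberg group'' is a slip; what you mean, and what your commutator computation uses, is that it is the $S^1$-extension induced from the finite Heisenberg group along $\mathbb{Z}/(n+1)\hookrightarrow S^1$.)

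Your route differs from the one the paper has in mind. The paper omits the proof, pointing to \cite{maruyama20-3}, but the template is visibly that of Theorem~\ref{thm:non-trivial_real}: embed a compact connected Lie group $K$ (here $PU(n+1)$) in $G$, identify $f^*e_c\in H^3(BK;\mathbb{Z})$ with a transgression in the Serre spectral sequence of the Borel fibration $\mathbb{C}P^n\to EK\times_K\mathbb{C}P^n\to BK$, and then deduce non-vanishing of $\iota^*e_c$ from Milnor's injectivity result $H^*(BK;\mathbb{Z})\hookrightarrow H^*(BK^\delta;\mathbb{Z})$ \cite{milnor83}. You instead restrict to a \emph{finite} subgroup, feed the problem through Theorem~\ref{main_thm}, and exploit the elementary fact that $H^{>0}_{\grp}(\Gamma;\mathbb{R})=0$ makes the Bockstein an isomorphism. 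Your approach is more self-contained---it avoids the non-trivial input from \cite{milnor83}---and makes pleasant internal use of the paper's main theorem; the paper's approach is more direct and establishes $e_c\neq 0$ at the level of $H^3(BG;\mathbb{Z})$ without first passing to group cohomology.
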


The cohomology class $c$ in Theorem \ref{thm:non-trivial_complex} is non-zero in $\HHH^2(\mathbb{C}P^n;\RR)$.
Hence this example does not satisfy the assumption in Section \ref{section:S^1-extension}.
An example which satisfies the assumption in Section \ref{section:S^1-extension} is given as follows.

\begin{theorem}\label{thm:non-trivial_real}
  Let $c$ be the non-zero element in $\HHH^2(\RR P^3;\ZZ)\cong \ZZ/2\ZZ$.
  Let $G$ denote the group $\Homeo(\RR P^3,c)$ of $c$-preserving homeomorphisms.
  Then the universal characteristic classes $D_c \in \HHH^3(BG;\ZZ)$ and $\iota^*D_c \in \HHH^3(BG^{\delta};\ZZ)$ are non-zero.
\end{theorem}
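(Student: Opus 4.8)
The plan is to deduce both non-vanishing statements from the single assertion that $\iota^* e_c \neq 0$: since $\iota^*$ is a homomorphism, $\iota^* e_c \neq 0$ forces $e_c \neq 0$ as well. First I would check that the hypotheses of Section \ref{section:S^1-extension} are met. One has $H^1(\mathbb{R}P^3;\mathbb{Z}) = 0$, so $e_c$ is defined, and $c$ is $2$-torsion, hence zero in $H^2(\mathbb{R}P^3;\mathbb{R}) = 0$. Moreover $H^1(\mathbb{R}P^3;\mathbb{R}) = 0$, so the long exact sequence shows that $\delta : H^1(\mathbb{R}P^3;S^1) \to H^2(\mathbb{R}P^3;\mathbb{Z})$ is an isomorphism $\mathbb{Z}/2 \to \mathbb{Z}/2$; let $\rho$ be the non-trivial class with $\delta\rho = c$. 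By Theorem \ref{main_thm_S^1} we then have $\iota^* e_c = \delta\, e(A_G(P_\rho^\delta))$, so it suffices to produce a finite group $\Lambda$ and a homomorphism $r : \Lambda \to G$ for which $r^*\iota^* e_c = \delta\, e(r^*A_G(P_\rho^\delta))$ is non-zero in $H_{\grp}^3(\Lambda;\mathbb{Z})$.

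The geometric input is the identification $\mathbb{R}P^3 \cong SO(3)$, with universal cover $S^3 \cong SU(2) \to SO(3)$ the spin double cover and deck group $\{\pm 1\} \subset SU(2)$. Left translation embeds $SO(3) \hookrightarrow \Homeo(\mathbb{R}P^3)_0 = G$, and I would restrict along the Klein four-group $V = (\mathbb{Z}/2)^2 \subset SO(3)$ of half-turns about three orthogonal axes, whose preimage in $SU(2)$ is the quaternion group $Q_8 = \{\pm 1, \pm i, \pm j, \pm k\}$. Writing $P_\rho^\delta = SU(2)\times_{\{\pm1\}}(S^1)^\delta$, where $-1$ acts as the deck transformation on $SU(2)$ and by $u \mapsto u + \tfrac12$ on $(S^1)^\delta$, the automorphisms covering $V$ are exactly the maps $[g,u] \mapsto [a g, u + t]$ with $a \in Q_8$ and $t \in S^1$, subject to $(a,t)\sim(-a, t+\tfrac12)$. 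Thus the pulled-back extension is $r^*A_G(P_\rho^\delta) = A_V(P_\rho^\delta) \cong (Q_8 \times S^1)/\langle(-1,\tfrac12)\rangle$, i.e. the pushout of the quaternion extension $1 \to \{\pm1\} \to Q_8 \to V \to 1$ along the inclusion $\beta : \{\pm1\} \hookrightarrow S^1$, $-1 \mapsto \tfrac12$. Hence $e(A_V(P_\rho^\delta)) = \beta_*[Q_8]$, where $[Q_8] \in H_{\grp}^2(V;\mathbb{Z}/2)$ is the class of the quaternion extension.

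To finish, I would compare Bocksteins. The inclusion $\beta$ fits into a morphism from $0 \to \mathbb{Z} \xrightarrow{2} \mathbb{Z} \to \mathbb{Z}/2 \to 0$ to $0 \to \mathbb{Z} \to \mathbb{R} \to S^1 \to 0$ that induces the identity on the left $\mathbb{Z}$, so by naturality of the connecting map, $\delta\beta_* = \tilde\beta$, the integral Bockstein of $\times 2$. Therefore $r^*\iota^* e_c = \tilde\beta[Q_8] \in H_{\grp}^3(V;\mathbb{Z})$. In $H_{\grp}^*(V;\mathbb{Z}/2) = \mathbb{Z}/2[x,y]$ the quaternion class is $[Q_8] = x^2 + xy + y^2$, and $\mathrm{Sq}^1[Q_8] = x^2 y + x y^2 = xy(x+y) \neq 0$. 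Since $\mathrm{Sq}^1$ is the mod-$2$ reduction of $\tilde\beta$, this shows $\tilde\beta[Q_8] \neq 0$, hence $r^*\iota^* e_c \neq 0$, and therefore $\iota^* e_c \neq 0$ and $e_c \neq 0$.

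The main obstacle I anticipate is the bookkeeping in the second paragraph: verifying that every bundle automorphism of $P_\rho^\delta$ covering a half-turn is of the asserted form (so that the restricted extension is genuinely $Q_8$-built and not larger), and tracking the two lifts $a,-a \in SU(2)$ correctly so that the identification $(a,t)\sim(-a,t+\tfrac12)$ — and hence the pushout description $\beta_*[Q_8]$ — comes out right. Everything else is naturality of the connecting homomorphisms, namely Theorem \ref{main_thm_S^1} applied after restriction to $V$, together with the standard fact that restriction of an extension class corresponds to pullback of the extension; the closing Steenrod-square computation is routine once $[Q_8] = x^2+xy+y^2$ is recalled.
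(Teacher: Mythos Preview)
Your argument is correct, and it takes a genuinely different route from the paper's own proof. Both proofs begin from the inclusion $SO(3)\cong\mathbb{R}P^3\hookrightarrow G$ by left translation, but diverge from there. The paper pulls $e_c$ back along $f:BSO(3)\to BG$, identifies $f^*e_c$ as the transgression of $c$ in the contractible fibration $SO(3)\to ESO(3)\to BSO(3)$ (hence non-zero), and then appeals to Milnor's result that $\iota^*:H^3(BSO(3);\mathbb{Z})\to H^3(BSO(3)^{\delta};\mathbb{Z})$ is injective to conclude $\iota^*e_c\neq 0$. You instead invoke the paper's own Theorem \ref{main_thm_S^1}, restrict further to the finite subgroup $V=(\mathbb{Z}/2)^2\subset SO(3)$, identify the pulled-back $S^1$-extension with the pushout of the quaternion extension along $\mathbb{Z}/2\hookrightarrow S^1$, and finish with a Bockstein/Steenrod-square calculation in $H^*(V;\mathbb{Z}/2)$. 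The paper's argument is shorter but imports the nontrivial Milnor injectivity theorem; your argument is more self-contained and elementary (everything reduces to a finite computation), and it has the pleasant feature of exhibiting $\iota^*e_c$ as detected on an explicit finite subgroup, illustrating Theorem \ref{main_thm_S^1} in action. The bookkeeping you flag is sound: since the gauge group of $P_\rho^\delta$ is just $S^1$ (the fibre being discrete and $\mathbb{R}P^3$ connected), every automorphism covering $v\in V$ differs from $[g,u]\mapsto[ag,u]$ by a global $S^1$-translation, and the two lifts $\pm a$ give the identification $(a,t)\sim(-a,t+\tfrac12)$ exactly as you wrote.
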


\begin{proof}
  Since $\RR P^3$ is homeomorphic to $SO(3)$, the group $SO(3)$ is included in $G$ as the left translations.
  Hence we have the commutative diagram
  \begin{align*}
    \xymatrix{
    SO(3) \ar[r] \ar[d] & ESO(3) \ar[r] \ar[d]^{F} & BSO(3) \ar[d]^{f}\\
    G \ar[r] & EG \ar[r] & BG.
    }
  \end{align*}
  Recall that a universal $G$-bundle $\RR P^3 \to E \to BG$ is given as $E = EG \times_G \RR P^3$.
  For the base point $b \in \RR P^3$ corresponding to the unit $1 \in SO(3)$, we define a map $\phi \colon ESO(3) \to E$ by $\Phi(x) = [F(x),b]$ for $x \in ESO(3)$.
  Then we have the commutative diagram
  \begin{align}\label{diagram_SO(3)_HomeoRP3}
    \xymatrix{
    SO(3) \ar[r] \ar@{=}[d] & ESO(3) \ar[r] \ar[d]^{\Phi} & BSO(3) \ar[d]^{f}\\
    SO(3) \cong \RR P^3 \ar[r] & E \ar[r] & BG.
    }
  \end{align}
  Let us consider the Serre spectral sequences of the fibrations in (\ref{diagram_SO(3)_HomeoRP3}).
  By the naturality of the Serre spectral sequence, the pullback $f^*D_c \in \HHH^3(BSO(3);\ZZ)$ is equal to the transgression image of $c\in \HHH^2(\RR P^3;\ZZ) \cong \HHH^2(SO(3);\ZZ)$ with respect to the fibration $SO(3) \to ESO(3) \to BSO(3)$.
  Since $ESO(3)$ is contractible, the transgression map is injective.
  Hence the class $f^*D_c$ is non-zero, and so is $D_c$.

  Let us consider the commutative diagram
  \begin{align*}
    \xymatrix{
    \HHH^3(BG;\ZZ) \ar[r]^{\iota^*} \ar[d]^{f^*} & \HHH^3(BG^{\delta};\ZZ) \ar[d] \\
    \HHH^3(BSO(3);\ZZ) \ar[r]^{\iota^*} & \HHH^3(BSO(3)^{\delta};\ZZ).
    }
  \end{align*}
  Since the map $\iota^*\colon\HHH^3(BSO(3);\ZZ) \to \HHH^3(BSO(3)^{\delta};\ZZ)$ is injective \cite{milnor83}, the class $\iota^*f^*D_c \in \HHH^3(BSO(3)^{\delta};\ZZ)$ is non-zero.
  Hence the class $\iota^*D_c \in \HHH^3(BG^{\delta};\ZZ)$ is also non-zero.
\end{proof}

\begin{remark}
  By Theorem \ref{main_thm_S^1} and Theorem \ref{thm:non-trivial_real}, the class $e(\Aut(P_{\rho}^{\delta}))$ is also non-zero for $G = \Homeo(\RR P^3,c)$.
  Hence, this gives an example that cocycle (\ref{cocycle_S1_extension}) is cohomologically non-trivial.
\end{remark}

\begin{remark}
  The non-triviality of the class $c$ does not imply the non-triviality of $D_c$ nor $\iota^*D_c$.
  For example, we consider the real projective plane $\RR P^2$.
  Let $c \in \HHH^2(\RR P^2;\ZZ)\cong \ZZ/2\ZZ$ be a non-zero element and set $G = \Homeo(\RR P^2, c)$.
  Then the class $D_c \in \HHH^3(BG;\ZZ)$ is equal to zero.
  To see this, we consider the inclusion $i \colon SO(3) \hookrightarrow \Homeo_0(\RR P^2)$ induced from the natural $SO(3)$-action on $\RR P^2$.
  This inclusion is a homotopy equivalence (\cite[Corollary 1.2]{2108.02134}), and hence the map $i^* \HHH^3(BG;\ZZ) \to \HHH^3(BG;\ZZ)$ is an isomorphism.
  Let us consider the commutative diagram of fibrations
  \begin{align}\label{diagram_real_proj}
    \xymatrix{
    SO(3) \ar[r] \ar[d]^-{f} & ESO(3) \ar[r] \ar[d] & BSO(3) \ar[d] \\
    \RR P^2 \ar[r] & EG \times_{G} \RR P^2 \ar[r] & BG.
    }
  \end{align}
  Since $f \colon SO(3) \to \RR P^2$ is induced from the natural $SO(3)$-action on $S^2$, the map $f$ factors through $S^2$.
  Since $\HHH^2(SO(3);\ZZ) \cong \HHH^2(\RR P^2;\ZZ) \cong \ZZ/2\ZZ$ and $\HHH^2(S^2;\ZZ) \cong \ZZ$, the map $f$ induces the zero map on second cohomology.
  The naturality of the Dixmier-Douady class with respect to (\ref{diagram_real_proj}), together with the fact that the induced map $i^* \colon \HHH^3(BG;\ZZ) \to \HHH^3(BSO(3);\ZZ)$ is an isomorphism, implies that the Dixmier-Douady class $D_c$ is equal to zero.

\end{remark}

\bibliographystyle{amsalpha}
\bibliography{gauge.bib}
\end{document}